\providecommand{\U}[1]{\protect\rule{.1in}{.1in}}
\providecommand{\U}[1]{\protect\rule{.1in}{.1in}}
\providecommand{\U}[1]{\protect\rule{.1in}{.1in}}
\providecommand{\U}[1]{\protect\rule{.1in}{.1in}}
\providecommand{\U}[1]{\protect\rule{.1in}{.1in}}
\providecommand{\U}[1]{\protect\rule{.1in}{.1in}}
\providecommand{\U}[1]{\protect\rule{.1in}{.1in}}
\providecommand{\U}[1]{\protect\rule{.1in}{.1in}}
\providecommand{\U}[1]{\protect\rule{.1in}{.1in}}
\providecommand{\U}[1]{\protect\rule{.1in}{.1in}}
\providecommand{\U}[1]{\protect\rule{.1in}{.1in}}
\providecommand{\U}[1]{\protect\rule{.1in}{.1in}}
\providecommand{\U}[1]{\protect\rule{.1in}{.1in}}
\providecommand{\U}[1]{\protect\rule{.1in}{.1in}}
\providecommand{\U}[1]{\protect\rule{.1in}{.1in}}
\providecommand{\U}[1]{\protect\rule{.1in}{.1in}}
\providecommand{\U}[1]{\protect\rule{.1in}{.1in}}
\providecommand{\U}[1]{\protect\rule{.1in}{.1in}}
\providecommand{\U}[1]{\protect\rule{.1in}{.1in}}
\providecommand{\U}[1]{\protect\rule{.1in}{.1in}}
\providecommand{\U}[1]{\protect\rule{.1in}{.1in}}
\providecommand{\U}[1]{\protect\rule{.1in}{.1in}}
\providecommand{\U}[1]{\protect\rule{.1in}{.1in}}
\providecommand{\U}[1]{\protect\rule{.1in}{.1in}}
\providecommand{\U}[1]{\protect\rule{.1in}{.1in}}
\providecommand{\U}[1]{\protect\rule{.1in}{.1in}}
\providecommand{\U}[1]{\protect\rule{.1in}{.1in}}
\providecommand{\U}[1]{\protect\rule{.1in}{.1in}}
\providecommand{\U}[1]{\protect\rule{.1in}{.1in}}
\providecommand{\U}[1]{\protect\rule{.1in}{.1in}}
\providecommand{\U}[1]{\protect\rule{.1in}{.1in}}
\providecommand{\U}[1]{\protect\rule{.1in}{.1in}}
\providecommand{\U}[1]{\protect\rule{.1in}{.1in}}
\providecommand{\U}[1]{\protect\rule{.1in}{.1in}}
\providecommand{\U}[1]{\protect\rule{.1in}{.1in}}
\providecommand{\U}[1]{\protect\rule{.1in}{.1in}}
\providecommand{\U}[1]{\protect\rule{.1in}{.1in}}
\providecommand{\U}[1]{\protect\rule{.1in}{.1in}}
\newtheorem{theorem}{Theorem}
{}
\newtheorem{condition}{Condition}
\newtheorem{example}{Example}
\newtheorem{lemma}{Lemma}
{}
\newtheorem{proposition}{Proposition}
\newenvironment{proof}[1][Proof]{\textbf{#1.} }{\ \rule{0.5em}{0.5em}}
\begin{document}

\title{Estimates for Dirichlet Eigenvalues of the Schr\"{o}dinger operator with the
Kronig-Penney Model}
\author{Cemile Nur* and Oktay Veliev**
\and Department of Basic Sciences, Yalova University\\Department of Basic Sciences, Yalova University, Yalova, Turkey \\{\small e-mail: cnur@yalova.edu.tr*} {\small ORCID iD:
https://orcid.org/0000-0001-7375-3474} \\Department of Mechanical Engineering, Dogus University, Istanbul, Turkey \\{\small e-mail: oveliev@dogus.edu.tr**} {\small ORCID iD:
https://orcid.org/0000-0002-0314-9404}}
\date{}
\maketitle

\begin{abstract}
In this paper, we first improve some asymptotic formulas previously obtained
and provide sharp asymptotic formulas explicitly expressed by the potential.
For the potentials of bounded variation, we obtain asymptotic formulas in
which the first and second terms are explicitly determined and separated from
the error terms. In addition, we illustrate these formulas for the
Kronig-Penney potential. We then provide estimates for the small Dirichlet
eigenvalues of the one-dimensional Schr\"{o}dinger operator in the
Kronig-Penney model. We derive several useful equations from certain iteration
formulas for computing these Dirichlet eigenvalues, and prove that all the
eigenvalues can be found by the fixed point iteration. Then, using the Banach
fixed point theorem, we estimate the eigenvalues numerically. Moreover, we
present error estimates and include a numerical example.

Key Words: Eigenvalue estimations, Dirichlet boundary conditions,
Kronig-Penney model.

AMS Mathematics Subject Classification: 34L05, 34L15, 65L15.

\end{abstract}

\section{Introduction and Preliminary Facts}

\label{Sec:1} In the present paper, we consider the operator generated in
$L_{2}[0,\pi]$ by the expression
\begin{equation}
-y^{\prime\prime}+q(x)y \label{eq1}%
\end{equation}
and the boundary conditions $y(\pi)=y(0)=0$, where $q(x)$ is a complex-valued
summable function. In~\cite{13}, we proved that the eigenvalue $\lambda_{n}$
of the operator $L(q)$ lying in $O(1)$ neighborhood of $n^{2}$ satisfies the
equality
\begin{equation}
(\lambda_{n}-n^{2}-A_{m}(\lambda_{n}))(\Psi_{n},\sin nx)=R_{m+1}, \label{eq2}%
\end{equation}
where $n$\ is a large number, $\Psi_{n}$ is an eigenfunction corresponding to
the eigenvalue $\lambda_{n}$ and satisfying the inequality $|(\Psi_{n},\sin
nx)|>1/2$,
\[
A_{m}(\lambda_{n})=\sum_{k=0}^{m}a_{k}(\lambda_{n}),
\]%
\begin{equation}
a_{0}(\lambda_{n})=-C_{2n},\qquad a_{1}(\lambda_{n})=\sum_{\substack{n_{1}%
=-\infty\\n_{1}\neq0,-2n}}^{\infty}\frac{C_{n_{1}}(C_{n_{1}}-C_{n_{1}+2n}%
)}{\lambda_{n}-(n+n_{1})^{2}}, \label{eq3}%
\end{equation}%
\begin{equation}
a_{2}(\lambda_{n})=\sum_{\substack{n_{1},n_{2}=-\infty\\n_{1},n_{1}+n_{2}%
\neq0,-2n}}^{\infty}\frac{C_{n_{1}}C_{n_{2}}(C_{n_{1}+n_{2}}-C_{n_{1}%
+n_{2}+2n})}{[\lambda_{n}-(n+n_{1})^{2}][\lambda_{n}-(n+n_{1}+n_{2})^{2}]},
\label{eq4}%
\end{equation}%
\begin{equation}
a_{k}(\lambda_{n})=\sum_{n_{1},n_{2},...,n_{k}=-\infty}^{\infty}\frac
{C_{n_{1}}C_{n_{2}}...C_{n_{k}}(C_{n_{1}+n_{2}+..+n_{k}}-C_{n_{1}%
+n_{2}+...+n_{k}+2n})}{\prod_{j=1}^{k}[\lambda_{n}-(n+\sum_{s=1}^{j}n_{s}%
)^{2}]}, \label{eq5}%
\end{equation}%
\begin{equation}
R_{m+1}(\lambda_{n})=\sum_{n_{1},n_{2},...,n_{m+1}=-\infty}^{\infty}%
\frac{C_{n_{1}}C_{n_{2}}...C_{n_{m+1}}\bigl(q(x)\Psi_{n}(x),\sin
\bigl(n+\sum_{k=1}^{m+1}n_{k}\bigr)\bigr)}{\prod_{j=1}^{m+1}[\lambda
_{n}-(n+\sum_{k=1}^{j}n_{k})^{2}]}, \label{eq6}%
\end{equation}
$C_{n}=\dfrac{1}{\pi}%
%TCIMACRO{\dint _{0}^{\pi}}%
%BeginExpansion
{\displaystyle\int_{0}^{\pi}}
%EndExpansion
q(x)\cos nxdx$, and, without loss of generality, it is assumed that $C_{0}=0$.
Here the sums for $a_{k}(\lambda_{n})$ and $R_{m+1}(\lambda_{n})$ are taken
under the conditions $n_{1}+n_{2}+...+n_{s}\neq0,-2n$, for $s=1,2,...,k$\ and
$s=1,2,...,m+1$, respectively. Moreover, we have
\begin{equation}
a_{k}=O\biggl(\bigl(\frac{ln\left\vert n\right\vert }{n}\bigr)^{k}%
\biggr),\qquad R_{m+1}=O\biggl(\bigl(\frac{ln\left\vert n\right\vert }%
{n}\bigr)^{m+1}\biggr). \label{eq7}%
\end{equation}
Using~\eqref{eq2}-\eqref{eq7}, we obtained the following asymptotic formulas:
\[
\lambda_{n}=n^{2}-C_{2n}+O\bigl(\frac{ln\left\vert n\right\vert }{n}\bigr),
\]

\begin{equation}
\lambda_{n}=n^{2}-C_{2n}+\sum_{\substack{k=-\infty\\k\neq0,-2n}}^{\infty}%
\frac{C_{k}(C_{k}-C_{k+2n})}{[n^{2}-(n+k)^{2}]}+O\biggl(\bigl(\frac
{ln\left\vert n\right\vert }{n}\bigr)^{2}\biggr), \label{eq8}%
\end{equation}
and other formulas of order $O((\ln n)^{l}n^{-l})$ (for all $l>0$) for
$\lambda_{n}$ and corresponding eigenfunctions of the operator $L(q)$.

In this paper, in Section~\ref{Sec:2}, we improve the asymptotic formulas,
instead of the series in~\eqref{eq8}, we write some Fourier coefficients of
some functions explicitly expressed by the potential $q$. For the potential of
bounded variation, we explicitly determine the first and second terms of the
asymptotic formulas, which have the orders $O(n^{-1})$\ and $O(n^{-2})$,
respectively (see~\eqref{eq30} and Theorem~\ref{t1}). Note that, generally
speaking, if the potential $q$\ is a piecewise continuous function with some
jump, then the first term $C_{2n}$\ is of order $n^{-1}$\ and the next term in
asymptotic formula~\eqref{eq30} is of order $n^{-2}$. Consequently, asymptotic
formula~\eqref{eq30} with error $O(n^{-3})$\ explicitly separates the first
and second terms from the error terms.\ Then, we demonstrate this formula for
the Kronig-Penney potential
\begin{equation}
q(x)=%
\begin{cases}
a & \text{if }x\in\lbrack0,c]\\[2mm]%
b & \text{if }x\in(c,\pi],
\end{cases}
\label{eq9}%
\end{equation}
where $q(x+\pi)=q(x)$, and $c\in(0,\pi)$. Without loss of generality, we
assume that $a<b$. This inequality with the condition $C_{0}=0$\ implies that
\begin{equation}
ac+b(\pi-c)=0,\qquad a<0<b,\text{ }(b-a)c=b\pi. \label{eq10}%
\end{equation}

Note that, the Kronig-Penney model is a simplified model for an electron in a
one-dimensional periodic potential and has been studied in many works (see,
for example,~\cite{1, 3, 8, 10, 11}, and references therein). Veliev~\cite{10,
11} studied the bands and gaps in the spectrum of the Schr\"{o}dinger operator
with the Kronig-Penney potential and obtained asymptotic formulas for the
length of the gaps in the spectrum.

In Section~\ref{Sec:3}, we provide estimates for small Dirichlet eigenvalues
of the Schr\"{o}dinger operator $L(q)$ with the Kronig-Penney potential.
First, we derive equation~\eqref{eq42} for calculating the Dirichlet
eigenvalues using some iterations formulas by the methods used in~\cite{7, 9,
12, 13}. It is important to note that, the main difficulty of this section is
to prove that all the eigenvalues (small and large) of $L(q)$ can be obtained
from~\eqref{eq42} by the fixed point iteration. Then, we examine the small
eigenvalues using numerical methods. Moreover, in Section~\ref{Sec:3}, as a
result of meticulous and detailed investigations, we establish conditions on
the potential under which all the small eigenvalues satisfy
equality~\eqref{eq42} and the Banach fixed point theorem can be used for this equality.

\section{On the Sharp Asymptotic Formulas}

\label{Sec:2} In this section, first, to obtain explicit formulas for the
large eigenvalues from~\eqref{eq8}, we consider the series in~\eqref{eq8} as
the difference of the series
\begin{equation}
A_{1,n}(n^{2}):=\sum_{\substack{k=-\infty\\k\neq0,-2n}}^{\infty}\frac
{C_{k}^{2}}{n^{2}-(n+k)^{2}},\qquad B_{1,n}(n^{2}):=\sum_{\substack{k=-\infty
\\k\neq0,-2n}}^{\infty}\frac{C_{k}C_{k+2n}}{n^{2}-(n+k)^{2}}. \label{eq11}%
\end{equation}
Then, we consider these series in the following lemmas. First, let us consider
$A_{1,n}(n^{2})$. For this, introduce the notations:%
\begin{equation}
Q(x,n)=\int_{0}^{x}q(t)e^{i2nt}\,dt-q_{-2n}x,\qquad Q_{n,0}=\frac{1}{\pi}%
\int_{0}^{\pi}Q(x,n)dx \label{eq12}%
\end{equation}
and%
\begin{equation}
G(x,n)=\int_{0}^{x}\bigl(q(t)e^{i2nt}\,+\frac{i\pi}{2}e^{it}\,q_{-2n}\bigr)dt,
\label{eq13}%
\end{equation}
where $q_{k}=\dfrac{1}{\pi}%
%TCIMACRO{\dint _{0}^{\pi}}%
%BeginExpansion
{\displaystyle\int_{0}^{\pi}}
%EndExpansion
q(x)e^{-ikx}dx$\ and $q_{0}=0$.

\begin{lemma}
\label{l1} If $q\in L_{1}[0,\pi]$, then the following formula holds:
\begin{equation}
A_{1,n}(n^{2})=\frac{C_{2n}^{2}}{4n^{2}}+D(n,q)+2\operatorname{Re}D(n,p),
\label{eq14}%
\end{equation}
where $D(n,f)=D_{1}(n,f)+D_{2}(n,f)$,
\[
D_{1}(n,f)=\frac{i}{4n\pi}\int_{0}^{\pi}f(x)(Q(x,n)-Q_{n,0})e^{-i2nx}%
\,dx,\quad D_{2}(n,f)=\frac{i}{4n\pi}\int_{0}^{\pi}f(x)G(x,n)e^{-i2nx}\,dx,
\]
and $p(x)=q(-x)$.
\end{lemma}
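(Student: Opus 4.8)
The goal is to evaluate the series $A_{1,n}(n^{2})=\sum_{k\neq0,-2n}C_{k}^{2}/(n^{2}-(n+k)^{2})$ by converting it into integrals involving the potential. The plan is to start from the standard identity $n^{2}-(n+k)^{2}=-k(k+2n)$ and perform a partial fraction decomposition: $\frac{1}{-k(k+2n)}=\frac{1}{2n}\bigl(\frac{1}{k}-\frac{1}{k+2n}\bigr)$ for $k\neq0,-2n$. Substituting this splits $A_{1,n}(n^{2})$ into two sums, $\frac{1}{2n}\sum_{k}\frac{C_{k}^{2}}{k}$ (with a suitable interpretation of the $k$ and $k+2n$ omissions) and $-\frac{1}{2n}\sum_{k}\frac{C_{k}^{2}}{k+2n}$. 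Re-indexing the second sum by $k\mapsto k-2n$ turns it into a sum of $C_{k-2n}^{2}/k$, so after combining, the $1/k$ factor is uniform and one is left with sums of the form $\sum_{k\neq0}\frac{C_{k}^{2}-C_{k-2n}^{2}}{k}$ up to the explicitly separated term $C_{2n}^{2}/(4n^{2})$ coming from the excluded index.

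The next step is to recognize these sums as Fourier-side expressions for integrals. Writing $C_{k}=\tfrac12(q_{k}+q_{-k})$ and expanding $C_{k}^{2}$, each product $q_{j}q_{\ell}$ with $j+\ell$ fixed is, by Parseval/convolution, a Fourier coefficient of a product or antiderivative of $q$ (and of $p(x)=q(-x)$). The factor $1/k$ is the Fourier multiplier for integration: $\sum_{k\neq 0}\frac{c_{k}}{k}e^{ikx}$ corresponds to $\int_{0}^{x}$ of the function with coefficients $c_{k}$, up to the mean-value correction — this is exactly why $Q(x,n)$ in \eqref{eq12} is defined with the $-q_{-2n}x$ subtraction (removing the $k=-2n$ resonant term) and why $Q_{n,0}$ subtracts the mean. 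Similarly $G(x,n)$ in \eqref{eq13} is the antiderivative-type object carrying the $q_{-2n}$ correction with the $e^{it}$ weight accounting for the shift by $2n$. Matching the reorganized double sum term by term against the defining integrals of $D_{1}(n,f)$ and $D_{2}(n,f)$ for $f=q$ and $f=p$, and collecting the pieces that involve $p$ into a single $2\operatorname{Re}D(n,p)$ using the reality structure $\overline{q_{k}}$-type relations, should yield \eqref{eq14}.

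The main obstacle will be the bookkeeping around the excluded indices $k=0$ and $k=-2n$: the partial-fraction step is only valid away from these, and the two omitted terms must be tracked carefully — one of them is absorbed into the explicit $C_{2n}^{2}/(4n^{2})$ term and the other is precisely what forces the $-q_{-2n}x$ and mean-subtraction corrections in the definitions of $Q(x,n)$, $Q_{n,0}$, and $G(x,n)$. A secondary technical point is justifying the interchange of summation and integration and the convergence of the rearranged series under only the hypothesis $q\in L_{1}[0,\pi]$; here one should argue that the relevant Fourier coefficients are bounded (Riemann–Lebesgue) and that the $1/k$ weights give an absolutely convergent outcome once paired against the $L_{1}$ function, or alternatively work first with trigonometric polynomials and pass to the limit. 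Once the index accounting is pinned down, the rest is a direct, if lengthy, identification of Fourier sums with the integrals defining $D_{1}$ and $D_{2}$.
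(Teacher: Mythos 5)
Your plan follows the same route as the paper: partial fractions $\frac{1}{k(2n+k)}=\frac{1}{2n}\bigl(\frac{1}{k}-\frac{1}{2n+k}\bigr)$, separation of the explicit term $\frac{C_{2n}^{2}}{4n^{2}}$, and identification of the remaining sum with Parseval pairings against the antiderivative-type functions $Q(x,n)$ and $G(x,n)$, with $p(x)=q(-x)$ producing $2\operatorname{Re}D(n,p)$. However, your middle step is mis-accounted. In the paper the two sums produced by the partial fractions are kept separate: the $1/k$-weighted sum collapses entirely, because $C_{-k}=C_{k}$ makes the terms for $k$ and $-k$ cancel pairwise and only the unpaired $k=2n$ survives (its partner $k=-2n$ being excluded), which is exactly where $\frac{C_{2n}^{2}}{4n^{2}}$ comes from; all further work is done on the untouched sum $\frac{1}{2n}\sum_{k\neq 0,-2n}\frac{C_{k}^{2}}{2n+k}$, whose denominators $2n+k$ are precisely the ones that reappear in the Fourier coefficients of $Q(x,n)$ and $G(x,n)$. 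If you instead shift $k\mapsto k-2n$ in the second sum and merge, the excluded index sets ($k\neq0,-2n$ versus $k\neq0,2n$ after the shift) do not coincide, so besides $\frac{C_{2n}^{2}}{4n^{2}}$ you pick up an extra leftover term of the type $\frac{C_{4n}^{2}}{4n^{2}}$, and you also destroy the $\frac{1}{2n+k}$ structure you need to match against $D_{1}$ and $D_{2}$; the claim that the explicit term arises ``from the excluded index'' after combining is therefore not the right bookkeeping.

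The second, more substantial gap is that the decisive identification is only asserted (``matching term by term \dots should yield \eqref{eq14}''), whereas it is the actual content of the proof. Concretely one must: write $C_{k}^{2}=\frac14\bigl(2|q_{k}|^{2}+q_{k}^{2}+q_{-k}^{2}\bigr)$, so the sum splits into $I_{1}$, $I_{2}$, $I_{3}=\overline{I_{2}}$; split $I_{1}$ by the parity of $k$, since the resonance $k=-2n$ occurs only for even $k$ (this is why two different auxiliary functions are needed); check $Q(0,n)=Q(\pi,n)=0$ and $G(0,n)=G(\pi,n)=0$ -- the term $\frac{i\pi}{2}e^{it}q_{-2n}$ in $G$ is there precisely to force $G(\pi,n)=0$, not to ``account for the shift by $2n$'' -- so that integration by parts gives $\int_{0}^{\pi}Q(x,n)e^{-i(2n+2m)x}dx=\frac{\pi q_{2m}}{i(2n+2m)}$ and $\int_{0}^{\pi}G(x,n)e^{-i(2n+2m+1)x}dx=\frac{\pi q_{2m+1}}{i(2n+2m+1)}$; insert these expansions (in the bases $\{e^{i2mx}\}$ and $\{e^{i(2m+1)x}\}$) into the integrals defining $D_{1}(n,q)$ and $D_{2}(n,q)$ to obtain the even and odd parts of $I_{1}$; and finally use $p_{k}=q_{-k}$ to repeat the argument with $p$ in place of $q$, giving $I_{2}=D(n,p)$ and $I_{2}+I_{3}=2\operatorname{Re}D(n,p)$. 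Your worry about convergence under $q\in L_{1}$ is also resolved by this structure rather than by a density argument: $Q(\cdot,n)$ and $G(\cdot,n)$ are absolutely continuous, hence bounded and in $L_{2}[0,\pi]$, so their Fourier expansions may be paired against $f\in L_{1}$ directly in the integrals defining $D_{1}$ and $D_{2}$.
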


\begin{proof}
By the definition of $A_{1,n}(n^{2})$, we have
\[
A_{1,n}(n^{2})=-\sum_{\substack{k=-\infty\\k\neq0,-2n}}^{\infty}\frac
{C_{k}^{2}}{k(2n+k)}.
\]
Since $\dfrac{1}{k(2n+k)}=\dfrac{1}{2n}\bigl(\dfrac{1}{k}-\dfrac{1}%
{2n+k}\bigr)$, we see that
\begin{equation}
A_{1,n}(n^{2})=\frac{1}{2n}\sum_{k\neq0,-2n}\frac{C_{k}^{2}}{2n+k}-\frac
{1}{2n}\sum_{k\neq0,-2n}\frac{C_{k}^{2}}{k}. \label{eq15}%
\end{equation}
Using the obvious equality $C_{-k}=C_{k}$ and $C_{0}=0,$ we obtain the
following equality for the second term on the right side of~\eqref{eq15}:
\begin{equation}
\frac{1}{2n}\sum_{k\neq0,-2n}\frac{C_{k}^{2}}{k}=\frac{C_{2n}^{2}}{4n^{2}}.
\label{eq16}%
\end{equation}

Now let us estimate the first term on the right side of~\eqref{eq15}. Since
$q_{-k}=\overline{q_{k}}$ and $\cos kx=\dfrac{1}{2}(e^{ikx}+e^{-ikx})$, we
have $C_{k}^{2}=\dfrac{1}{4}(2\left\vert q_{k}\right\vert ^{2}+q_{k}%
^{2}+q_{-k}^{2})$. Therefore, the first term on the right side of~\eqref{eq15}
can be written in the form
\begin{equation}
\frac{1}{2n}\sum_{k\neq0,-2n}\frac{C_{k}^{2}}{2n+k}=I_{1}+I_{2}+I_{3},
\label{eq17}%
\end{equation}
where
\begin{equation}
I_{1}=\frac{1}{4n}\sum_{\substack{k=-\infty\\k\neq0,-2n}}^{\infty}%
\frac{\left\vert q_{k}\right\vert ^{2}}{(2n+k)}=I_{1,1}+I_{1,2},\qquad
I_{2}=\frac{1}{8n}\sum_{\substack{k=-\infty\\k\neq0,-2n}}^{\infty}\frac
{q_{-k}^{2}}{(2n+k)},\qquad I_{3}=\overline{I_{2}}, \label{eq18}%
\end{equation}%
\[
I_{1,1}=\frac{1}{4n}\sum_{\substack{m=-\infty\\m\neq0,-n}}^{\infty}%
\frac{\left\vert q_{2m}\right\vert ^{2}}{(2n+2m)},\qquad I_{1,2}=\frac{1}%
{4n}\sum_{\substack{m=-\infty\\}}^{\infty}\frac{\left\vert q_{2m+1}\right\vert
^{2}}{(2n+2m+1)}.
\]
First, let us prove that
\begin{equation}
I_{1,1}=D_{1}(n,q),\qquad I_{1,2}=D_{2}(n,q),\qquad I_{1}=D(n,q). \label{eq19}%
\end{equation}
It is clear that $Q(\pi,n)=0$, $Q(0,n)=0$, and that the derivative of $Q(x,n)$
with respect to $x$\ is $Q^{\prime}(x,n)=q(x)e^{i2nx}\,-q_{-2n}$. Using the
integration by parts, we see that
\[
\int_{0}^{\pi}Q(x,n)e^{-i(2n+2m)x}dx=\frac{\pi q_{2m}}{i(2n+2m)},
\]
for $n+m\neq0$. Therefore, the Fourier decomposition of $Q(x,n)$ by the
orthonormal basis $\{e^{i2mx}/\sqrt{\pi}:m\in\mathbb{Z\}}$ of $L_{2}[0,\pi]$
has the form
\[
Q(x,n)=Q_{n,0}+\sum_{m\neq0,-n}\frac{q_{2m}e^{i(2n+2m)x}}{i(2n+2m)}.
\]
Using this decomposition in the integral for $D_{1}(n,q)$, we obtain the proof
of the first equality of~\eqref{eq19}.

Now, let us prove the second equality of~\eqref{eq19}. It is clear that
\[
G(\pi,n)=\int_{0}^{\pi}\bigl(q(t)e^{i2nt}\,+\frac{i\pi}{2}e^{it}%
\,q_{-2n}\bigr)dt=\pi q_{-2n}+\frac{\pi}{2}(e^{i\pi}-1)q_{-2n}=0,
\]
$G(0,n)=0$ and that $G^{\prime}(x,n)=q(x)e^{i2nx}\,+\dfrac{i\pi}{2}%
e^{ix}q_{-2n}$. Therefore,
\[
\int_{0}^{\pi}G(x,n)e^{-i(2n+2m+1)x}dx=\frac{\pi q_{2m+1}}{i(2n+2m+1)}.
\]
Thus, the Fourier decomposition of $G(x,n)$ by the orthonormal basis
$\{e^{i(2m+1)x}/\sqrt{\pi}:m\in\mathbb{Z}\}$ of $L_{2}[0,\pi]$ has the form \
\[
G(x,n)=\sum_{m\in\mathbb{Z}}\frac{q_{2m+1}e^{i(2n+2m+1)x}\,}{i(2n+2m+1)}.
\]
Using this decomposition of $G(x,n)$ in the integral for $D_{2}(n,q)$, we
obtain the proof of the second equality of~\eqref{eq19}. The third equality
of~\eqref{eq19} follows from~\eqref{eq18}.

Now, we consider $I_{2}$. Using the definition of $p(x)$ and the substitution
$t=-x$, we obtain
\[
p_{k}:=\frac{1}{\pi}\int_{0}^{\pi}p(x)e^{-ikx}dx\,=\frac{1}{\pi}\int_{0}^{\pi
}q(-x)e^{-ikx}dx\,=\frac{1}{\pi}\int_{0}^{\pi}q(t)e^{ikt}dt=q_{-k}.
\]
Therefore, instead of $q(x)$, using $p(x)$ and repeating the proof
of~\eqref{eq19}, we obtain that
\begin{equation}
I_{2}=D(n,p),\qquad I_{2}+I_{3}=2\operatorname{Re}D(n,p). \label{eq20}%
\end{equation}
Thus, the proof of~\eqref{eq14} follows from~\eqref{eq15}-\eqref{eq20}. The
lemma is proved.
\end{proof}

Now let us consider $B_{1,n}(n^{2})$.

\begin{lemma}
\label{l2} If $q\in L_{1}[0,\pi]$, then the following formula holds
\begin{equation}
B_{1,n}(n^{2})=-\frac{1}{\pi}\int_{0}^{\pi}Q^{2}(x)\cos2nxdx, \label{eq21}%
\end{equation}
where $Q(x)=$ $%
%TCIMACRO{\dint _{0}^{x}}%
%BeginExpansion
{\displaystyle\int_{0}^{x}}
%EndExpansion
q(x)dx$.
\end{lemma}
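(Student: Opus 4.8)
The plan is to mimic the partial-fraction manipulation used for $A_{1,n}(n^2)$ in Lemma~\ref{l1}, but now the cross term $C_k C_{k+2n}$ forces me to track a shifted index. I would start from the defining series
\[
B_{1,n}(n^{2})=\sum_{\substack{k=-\infty\\k\neq0,-2n}}^{\infty}\frac{C_{k}C_{k+2n}}{n^{2}-(n+k)^{2}}=-\sum_{\substack{k\neq0,-2n}}\frac{C_{k}C_{k+2n}}{k(2n+k)},
\]
and again split $\tfrac{1}{k(2n+k)}=\tfrac{1}{2n}\bigl(\tfrac1k-\tfrac1{2n+k}\bigr)$. In the second resulting sum I substitute $k\mapsto -(k+2n)$ (equivalently reindex $j=k+2n$); using $C_{-m}=C_m$ this turns $\sum_k \tfrac{C_kC_{k+2n}}{2n+k}$ into $\sum_j \tfrac{C_jC_{j+2n}}{-j}$ up to the excluded indices, so the two sums coincide and the $\tfrac{1}{2n}$ prefactors combine. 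The upshot should be a clean identity of the form $B_{1,n}(n^2)=-\tfrac{1}{2n}\sum_{k}\tfrac{C_kC_{k+2n}}{k}$ or, after symmetrizing, directly a single Fourier-type sum with no $n$ in the denominator beyond what is absorbed.

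Next I would recognize the resulting sum as a Fourier coefficient. Writing $Q(x)=\int_0^x q(t)\,dt$, note $Q$ is (up to the constant $Q(0)=0$) the antiderivative of $q$, and since $C_0=q_0=0$ we have $\frac1\pi\int_0^\pi q(x)\,dx=0$, so $Q(\pi)=0$ as well; hence $Q$ extends to a $\pi$-periodic, absolutely continuous function whose Fourier coefficients in the $\cos$/$\sin$ system are essentially $C_k/k$ (with the appropriate $i$ and sign). Concretely, integrating by parts, $\frac1\pi\int_0^\pi Q(x)\cos kx\,dx$ and $\frac1\pi\int_0^\pi Q(x)\sin kx\,dx$ are expressible through $C_k/k$, and the Fourier coefficients of $Q^2$ are the convolution of those of $Q$ with itself, which is exactly where a product $C_k C_{k+2n}$ summed over $k$ appears. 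Matching the $\cos 2nx$-coefficient of $Q^2$ against $\sum_k \tfrac{C_kC_{k+2n}}{k}$ (or its symmetrized form) should produce precisely $-\tfrac1\pi\int_0^\pi Q^2(x)\cos 2nx\,dx$.

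The main obstacle I anticipate is bookkeeping around the excluded indices $k\neq 0,-2n$ and making sure the reindexing $k\mapsto -(k+2n)$ pairs up the excluded terms correctly rather than introducing spurious boundary contributions; a related subtlety is that $Q$ has Fourier expansion in both $\cos$ and $\sin$, so I need the correct combination so that the odd/$\sin$ parts cancel and only the $\cos 2nx$ piece of $Q^2$ survives. Once the identity $\sum_k \tfrac{C_kC_{k+2n}}{k}$ (with the right symmetrization) is pinned down, converting it to the stated integral is a routine Parseval/convolution computation, so I would present the reindexing and the Parseval step carefully and leave the elementary integrations by parts terse.
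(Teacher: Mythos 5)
Your first step (partial fractions plus the reindexing $k\mapsto-(k+2n)$, using $C_{-m}=C_m$) is essentially the paper's treatment of the series side, except that when the two halves combine the prefactor is $-\tfrac1n$, not $-\tfrac1{2n}$: one gets $B_{1,n}(n^{2})=-\tfrac1n\sum_{k\neq0,-2n}C_kC_{k+2n}/k$. The genuine problem is in your Fourier step. The claim that $Q$ should be viewed through its $\pi$-periodic extension and that \emph{both} $\tfrac1\pi\int_0^\pi Q(x)\cos kx\,dx$ and $\tfrac1\pi\int_0^\pi Q(x)\sin kx\,dx$ are ``essentially $C_k/k$'' is false: integration by parts gives $\int_0^\pi Q(x)\cos kx\,dx=-\tfrac1k\int_0^\pi q(x)\sin kx\,dx$, which involves the \emph{sine} coefficients of $q$, not the $C_k$; and in the genuinely period-$\pi$ exponential system the coefficients of $Q$ are $q_{2m}/(2im)$, so only the even exponential coefficients of $q$ appear, which cannot by itself reproduce a sum running over all $C_k$. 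If you rely on either of these expansions, the ``routine Parseval/convolution'' matching stalls. The device that makes the computation close — and the one the paper uses — is the half-range \emph{sine} expansion: since $C_0=0$ gives $Q(0)=Q(\pi)=0$, and $\{\sin kx\}_{k\ge1}$ is already a complete orthogonal system in $L_2[0,\pi]$, one has $Q(x)=\sum_{k\ge1}\tfrac{2C_k}{k}\sin kx$ because $\int_0^\pi Q\sin kx\,dx=\pi C_k/k$; there is no cosine part to worry about, so your anticipated ``odd/sin cancellation'' issue disappears.

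Even with the correct expansion, the matching needs one more ingredient that your sketch glosses over: squaring the sine series and extracting the $\cos2nx$ coefficient produces denominators $k(k+2n)$ and $k(2n-k)$ (from $|k-l|=2n$ and $k+l=2n$), not the single denominator $k$ with a $1/n$ prefactor that your first step produced. So either drop the partial-fraction reduction and match $B_{1,n}(n^{2})=-\sum_{k\neq0,-2n}C_kC_{k+2n}/(k(2n+k))$ directly against the squared sine series (a few lines of reindexing with $C_{-m}=C_m$ then give equality with $-\tfrac1\pi\int_0^\pi Q^2\cos2nx\,dx$), or keep your reduced form and, as the paper does, first integrate $\int_0^\pi Q^{2}(x)\cos2nx\,dx$ by parts using $(Q^{2})'=2qQ$ to obtain $-\tfrac1n\int_0^\pi q(x)Q(x)\sin2nx\,dx$, which upon inserting the sine series of $Q$ and product-to-sum formulas yields exactly $-\tfrac{\pi}{n}\sum_{k\neq0}C_kC_{2n-k}/k$, matching your reduced series. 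With these two corrections your argument becomes the paper's proof.
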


\begin{proof}
To prove~\eqref{eq21}, we estimate the left and right sides of this equality
separately. First, estimate the right-side. Let $Q_{k}=%
%TCIMACRO{\dint _{0}^{\pi}}%
%BeginExpansion
{\displaystyle\int_{0}^{\pi}}
%EndExpansion
Q(x)\sin kxdx$. Then, $Q_{k}=\pi C_{k}/k$. Therefore, we have the
decomposition
\[
Q(x)=\sum_{k=1}^{\infty}\dfrac{2}{\pi}Q_{k}\sin kx
\]
of $Q(x)$. Using the equality $(Q^{2}(x))^{\prime}=2q(x)Q(x)$, integration by
parts, the decomposition of $Q(x)$, and then the equality $\sin kx\sin
2nx=\dfrac{1}{2}(\cos(2n-k)x-\cos(2n+k)x)$, we obtain
\begin{align*}
&  \int_{0}^{\pi}Q^{2}(x)\cos2nxdx=-\frac{1}{n}\int_{0}^{\pi}q(x)Q(x)\sin
2nxdx\\
&  \qquad=-\frac{1}{\pi n}\int_{0}^{\pi}q(x)\sum_{k=1}^{\infty}Q_{k}%
(\cos(2n-k)x-\cos(2n+k)x)dx\\
&  \qquad=\frac{-1}{n}\sum_{k=1}^{\infty}Q_{k}(C_{2n-k}-C_{2n+k})=\frac{-1}%
{n}\sum_{k=1}^{\infty}Q_{k}C_{2n-k}+\frac{1}{n}\sum_{k=1}^{\infty}%
Q_{k}C_{2n+k}).
\end{align*}
Now, using the equalities $Q_{k}=\pi C_{k}/k$, $C_{-k}=C_{k}$, and doing the
substitution $k\rightarrow-k$\ in the second summation of the last equality,
we obtain%
\[
\int_{0}^{\pi}Q^{2}(x)\cos2nxdx=\frac{-\pi}{n}\sum_{k\in\mathbb{Z},k\neq
0}\frac{C_{k}C_{2n-k}}{k}.
\]
Thus, the right side of~\eqref{eq21} has the form
\[
-\frac{1}{\pi}\int_{0}^{\pi}Q^{2}(x)\cos2nxdx=\frac{1}{n}\sum_{k\in
\mathbb{Z},k\neq0,2n}\frac{C_{k}C_{2n-k}}{k},
\]
since $C_{0}=0$.

Now, we prove that the left side of~\eqref{eq21} also can be written in this
form. Doing the substitution $k\rightarrow-k$, and then using the equality
$\dfrac{1}{k(2n-k)}=\dfrac{1}{2n}\bigl(\dfrac{1}{k}+\dfrac{1}{2n-k}\bigr)$, we
see that
\begin{align*}
&  B_{1,n}(n^{2})=\sum_{k\in\mathbb{Z},k\neq0,-2n}\frac{C_{k}C_{2n+k}}%
{n^{2}-(n+k)^{2}}=\sum_{k\in\mathbb{Z},k\neq0,-2n}\frac{C_{k}C_{2n+k}%
}{(2n+k)(-k)}=\sum_{k\in\mathbb{Z},k\neq0,2n}\frac{C_{k}C_{2n-k}}{k(2n-k)}\\
&  \qquad=\frac{1}{2n}\sum_{k\in\mathbb{Z},k\neq0,2n}\frac{C_{k}C_{2n-k}}%
{k}+\frac{1}{2n}\sum_{k\in\mathbb{Z},k\neq0,2n}\frac{C_{k}C_{2n-k}}{2n-k}.
\end{align*}
On the other hand, doing the substitution $k\rightarrow2n-k$\ in the second
summation of the last equality, we obtain
\[
\sum_{k\in\mathbb{Z},k\neq0,2n}\frac{C_{k}C_{2n-k}}{2n-k}=\sum_{k\in
\mathbb{Z},k\neq0,2n}\frac{C_{k}C_{2n-k}}{k},\qquad B_{1,n}(n^{2})=\frac{1}%
{n}\sum_{k\in\mathbb{Z},k\neq0,2n}\frac{C_{k}C_{2n-k}}{k},
\]
which means that~\eqref{eq21} holds. The lemma is proved.
\end{proof}

Now, we consider $a_{2}(n^{2})$. It is clear that $a_{2}(n^{2})=a_{2,1}%
(n^{2})-a_{2,2}(n^{2})$, where
\begin{align*}
a_{2,1}(n^{2})  &  =\sum\limits_{k,k+l\neq0,-2n}\frac{C_{k}C_{l}C_{k+l}%
}{[n^{2}-(n+k)^{2}][n^{2}-((n+k+l))^{2}]}=\sum\limits_{k,k+l\neq0,-2n}%
\frac{C_{k}C_{l}C_{-k-l}}{k(2n+k)(k+l)(2n+k+l)},\\
a_{2,2}(n^{2})  &  =\sum\limits_{k,k+l\neq0,-2n}\frac{C_{k}C_{l}C_{2n+k+l}%
}{k(2n+k)(k+l)(2n+k+l)}.
\end{align*}
Using~the equality $\dfrac{1}{(k+l)(2n+k+l)}=\dfrac{1}{2n}\bigl(\dfrac{1}%
{k+l}+\dfrac{1}{2n+k+l}\bigr)$, for $l=0$ and $l\neq0$, we obtain
\begin{equation}
a_{2,1}(n^{2})=\frac{1}{4n^{2}}(S_{1}(n,q)+S_{2}(n,q)+S_{3}(n,q)+S_{4}(n,q)),
\label{eq22}%
\end{equation}
where
\begin{align*}
S_{1}(n,q)  &  =\sum\limits_{k,k+l\neq0,-2n}\frac{C_{k}C_{l}C_{-k-l}}%
{k(k+l)},\qquad S_{2}(n,q)=\sum\limits_{k,k+l\neq0,-2n}\frac{C_{k}%
C_{l}C_{-k-l}}{(2n+k)(k+l)},\\
S_{3}(n,q)  &  =\sum\limits_{k,k+l\neq0,-2n}\frac{C_{k}C_{l}C_{-k-l}%
}{k(2n+k+l)},\qquad S_{4}(n,q)=\sum\limits_{k,k+l\neq0,2n}\frac{C_{k}%
C_{l}C_{-k-l}}{(2n+k)(2n+k+l)}.
\end{align*}
First, let us consider $S_{1}(n,q)$. Grouping the terms
\begin{align*}
&  \frac{C_{k}C_{l}C_{-k-l}}{k(k+l)},\quad\frac{C_{l}C_{k}C_{-k-l}}%
{l(k+l)},\quad\frac{C_{k}C_{-k-l}C_{l}}{k(-l)},\\
&  \frac{C_{l}C_{-k-l}C_{k}}{l(-k)},\quad\frac{C_{-k-l}C_{l}C_{k}}%
{(-k-l)(-k)},\quad\frac{C_{-k-l}C_{k}C_{l}}{(-k-l)(-l)}%
\end{align*}
with the equal multiplicands, $C_{k}C_{l}C_{-k-l}$, $C_{l}C_{k}C_{-k-l}$,
$C_{k}C_{-k-l}C_{l}$, $C_{l}C_{-k-l}C_{k}$, $C_{-k-l}C_{l}C_{k}$,
$C_{-k-l}C_{k}C_{l}$ and using the equality $\dfrac{1}{k(k+l)}+\dfrac
{1}{l(k+l)}=\dfrac{1}{kl}$, we obtain that
\begin{equation}
\sum\limits_{k,l}\frac{C_{k}C_{l}C_{-k-l}}{k(k+l)}=0, \label{eq23}%
\end{equation}
where the summations are taken under conditions $k,l,k+l\neq0$, since
$C_{0}=0$. However, in summation for $S_{1}(n,q)$ it is assumed additionally
that $k,k+l\neq-2n$. Therefore, we also need to consider the following
expressions:
\[
S_{1,1}(n,q):=\frac{C_{2n}}{2n}\sum\limits_{l\neq0,-2n}\frac{C_{2n}%
C_{l}C_{l-2n}}{2n(l-2n)},\qquad S_{1,2}(n,q)=\frac{C_{2n}}{2n}\sum
\limits_{l\neq0,-2n}\frac{C_{2n+l}C_{l}}{(2n+l)}.
\]
Using the Schwarz inequality for $l_{2}$ and taking into account that
$C_{l}=O(1)$, we obtain
\begin{equation}
S_{1,1}(n,q):=\frac{C_{2n}}{2n}O(1)=o\bigl(\frac{1}{n}\bigr),\qquad
S_{1,2}(q):=\frac{C_{2n}}{2n}O(1)=o\bigl(\frac{1}{n}\bigr), \label{eq24}%
\end{equation}
for $q\in L_{2}[0,\pi]$.

Denote by $E$ the set of all functions satisfying
\begin{equation}
C_{n}=\frac{1}{\pi}\int_{0}^{\pi}q(x)\cos nxdx=O\bigl(\frac{1}{n}\bigr),
\label{eq25}%
\end{equation}
as $n\rightarrow\infty$. For example, if $q$ is a function of bounded
variation on $[0,\pi]$, then~\eqref{eq25} holds. There is a larger set of
functions satisfying~\eqref{eq25} (see~\cite{14}). Here we do not discuss the
set $E$, since this is not the aim of this paper. We will apply the obtained
further results for potential~\eqref{eq9} which is a bounded variation.

Using~\eqref{eq25} in~\eqref{eq24}, we obtain
\[
S_{1,1}(n,q)=O\bigl(\frac{1}{n^{2}}\bigr),\text{ }S_{1,2}(q)=O\bigl(\frac
{1}{n^{2}}\bigr).
\]
These equalities with~\eqref{eq23} imply the following:

\begin{proposition}
\label{p1}If $q\in E$, then $S_{1}(n,q)=O(n^{-2})$.
\end{proposition}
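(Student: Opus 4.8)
The plan is to combine the algebraic cancellation identity \eqref{eq23} with the correction terms $S_{1,1}(n,q)$ and $S_{1,2}(q)$ that account for the extra exclusions $k,k+l\neq-2n$ in the summation defining $S_{1}(n,q)$. First I would verify carefully that the difference between the unrestricted sum $\sum_{k,l,k+l\neq0}\frac{C_{k}C_{l}C_{-k-l}}{k(k+l)}$ and the restricted sum $S_{1}(n,q)$ is precisely a sum of finitely many ``strips'' of terms, namely those with $k=-2n$ (and $l$ ranging) and those with $k+l=-2n$ (and the other index ranging); after the symmetrization/grouping that produces \eqref{eq23}, these leftover strips are exactly what is collected in $S_{1,1}(n,q)$ and $S_{1,2}(n,q)$ as displayed. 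So the identity to establish is $S_{1}(n,q)=-S_{1,1}(n,q)-S_{1,2}(n,q)$ (up to the obvious sign bookkeeping), and then the bound follows.

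The key steps, in order, are: (i) invoke \eqref{eq23} so that the full symmetrized sum vanishes; (ii) isolate the boundary contributions coming from reinstating the conditions $k\neq-2n$ and $k+l\neq-2n$, and identify them with $S_{1,1}(n,q)$ and $S_{1,2}(n,q)$ as written just above the proposition — here one uses $C_{-k-l}=C_{k+l}$ and the substitution $k=-2n$ to rewrite the relevant terms in the stated closed form; (iii) apply the Cauchy--Schwarz inequality in $\ell_{2}$ together with $C_{l}=O(1)$ (valid for $q\in L_{2}[0,\pi]\supset E$) to conclude $S_{1,1}(n,q)=\frac{C_{2n}}{2n}O(1)$ and likewise for $S_{1,2}(q)$, which is exactly \eqref{eq24}; (iv) finally substitute the hypothesis $q\in E$, i.e.\ $C_{2n}=O(1/n)$ from \eqref{eq25}, to upgrade the $o(1/n)$ bound in \eqref{eq24} to $O(1/n^{2})$ for both correction terms. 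Putting (i)--(iv) together gives $S_{1}(n,q)=O(n^{-2})$.

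The main obstacle I anticipate is step (ii): making the bookkeeping of the excluded strips genuinely rigorous rather than heuristic. One must be sure that when passing from the six-fold grouped sum behind \eqref{eq23} to the restricted sum $S_{1}(n,q)$, every term with $k=-2n$ or $k+l=-2n$ is counted with the correct multiplicity (some terms may be excluded in several of the six groups simultaneously when, say, both $k=-2n$ and $l=-2n$), and that no term with $k=l=0$ or $k+l=0$ sneaks back in. A clean way to handle this is to write $S_{1}(n,q)$ as the unrestricted symmetric sum minus the sum over the ``defect set'' $\{k=-2n\}\cup\{k+l=-2n\}$, handle the overlap by inclusion--exclusion, and observe that the overlap and all lower-order pieces are again of the form $\frac{C_{2n}}{2n}\cdot O(1)$, so they are absorbed into the same estimate. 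Once the defect set is correctly enumerated, the convergence of the residual single sums is immediate from Cauchy--Schwarz as in \eqref{eq24}, and the rest is routine.
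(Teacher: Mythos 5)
Your proposal is correct and follows essentially the same route as the paper: the cancellation identity \eqref{eq23} for the unrestricted symmetrized sum, isolation of the excluded strips $k=-2n$ and $k+l=-2n$ as the correction terms $S_{1,1}(n,q)$ and $S_{1,2}(n,q)$, the Cauchy--Schwarz bound giving \eqref{eq24}, and finally \eqref{eq25} to upgrade to $O(n^{-2})$. Your extra inclusion--exclusion care for overlapping strips is a finer bookkeeping than the paper records, but it is consistent with, and only sharpens, the paper's argument.
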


Now, we estimate $S_{j}(n,q)$, for $j=2,3,4$, in the following lemma. For
this, we use the following easily checkable relations:%
\begin{equation}
\sum\limits_{k\neq0,-2n}\frac{1}{k^{2}(2n+k)}=O\bigl(\frac{1}{n}%
\bigr),\quad\sum\limits_{l\neq0,-k}\frac{1}{l(k+l)^{2}}=O\bigl(\frac{1}%
{k}\bigr),\quad\sum\limits_{k\neq0,-2n}\frac{1}{\left\vert k(2n+k)\right\vert
}=O\bigl(\frac{\ln n}{n}\bigr). \label{eq26}%
\end{equation}
To prove the first equality, split the sum into three parts
\[
\sum\limits_{k\leq n,\text{ }k\neq0}\frac{1}{k^{2}(2n+k)},\qquad
\sum\limits_{k>n}\frac{1}{k^{2}(2n+k)},\qquad\sum\limits_{k<-n,k\neq-2n}%
\frac{1}{k^{2}(2n+k)}%
\]
and estimate each of them in a standard way. The second and third equalities
can be proved in the same way.

\begin{lemma}
\label{l3} If $q\in E$, then the equalities
\begin{equation}
S_{j}(n,q)=O\bigl(\frac{1}{n}\bigr), \label{eq27}%
\end{equation}
for $j=2,3,4$, hold.
\end{lemma}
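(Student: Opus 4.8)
\textbf{Proof plan for Lemma~\ref{l3}.} The strategy is to treat the three sums $S_2,S_3,S_4$ separately, in each case splitting the triple sum over $k,l$ according to which index is ``small'' or according to the value of $k+l$, and then applying the bound $C_k=O(1/|k|)$ (valid since $q\in E$) together with the auxiliary estimates~\eqref{eq26}. The point is that in each $S_j$ one of the two denominator factors involves $2n+\cdot$, which is large (of order $n$) except on a thin set of indices, so the naive bound already gains a factor $1/n$; the remaining sum over the other index must then be shown to converge (or to grow no worse than logarithmically, which is still absorbed).

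\textbf{Step 1: $S_2(n,q)$.} Write $S_2(n,q)=\sum C_kC_lC_{-k-l}/[(2n+k)(k+l)]$. The idea is to first sum over $l$ with $k$ fixed. Using $|C_l C_{-k-l}|=O(1/(|l||k+l|))$ we compare with $\sum_{l\neq0,-k}1/(|l|(k+l)^2)=O(1/|k|)$ from the second relation in~\eqref{eq26}, so that the inner sum is $O(1/|k|)$ uniformly; combined with $|C_k/(2n+k)|=O(1/(|k||2n+k|))$ we are left with $\sum_{k\neq0,-2n}1/(k^2|2n+k|)=O(1/n)$ by the first relation in~\eqref{eq26}. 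One must also separately handle the excluded index $k+l=-2n$ (i.e.\ the terms dropped from the ``free'' sum), exactly as was done for $S_{1,1},S_{1,2}$: those contribute a factor $C_{2n+k+l}$ which is merely $O(1)$, but they come with an extra $C_{2n}$ or equivalent, giving $o(1/n)$ and then $O(1/n^2)$ under~\eqref{eq25}.

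\textbf{Step 2: $S_3(n,q)$ and $S_4(n,q)$.} For $S_3$, the denominator is $k(2n+k+l)$; here the substitution $m=k+l$ turns the inner sum (over $l$, equivalently over $m$) into $\sum_m C_{m-k}C_{-m}/(2n+m)$, and $|C_{m-k}C_{-m}|=O(1/(|m-k||m|))$ compares with a convergent sum in $m$ times $1/|2n+m|$, which is $O(1/n)$ off a bounded set of $m$; the outer factor $C_k/k=O(1/k^2)$ then sums to a constant, giving $O(1/n)$. For $S_4$, both denominator factors are of the form $2n+\cdot$; here one fixes $k+l=m$ so the factor $1/(2n+m)$ is pulled out, the inner sum over $k$ of $|C_kC_{m-k}|/|2n+k|$ is $O(\ln n/n)$ by the third relation in~\eqref{eq26} after using $C_k=O(1/|k|)$ to make it summable in $k$ near $0$ and $\infty$, and then $\sum_m |C_{-m}|/|2n+m|$ contributes another $O(\ln n /n)$; the product is $O((\ln n)^2/n^2)=O(1/n)$, comfortably within the claimed bound. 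In all three cases the excluded diagonal indices $k,k+l\in\{0,-2n\}$ (or $2n$ for $S_4$) are peeled off and estimated as in~\eqref{eq24}.

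\textbf{Main obstacle.} The genuinely delicate point is the bookkeeping of the excluded indices and the order of summation: one must be careful that after isolating the large factor $2n+\cdot$, the remaining double sum is absolutely convergent (or only logarithmically divergent) \emph{uniformly} in the parameter that was held fixed, so that the relations~\eqref{eq26} can be applied. This requires splitting each inner sum into the regime where the free index is comparable to the fixed index (where $C_{k+l}$-type factors are dangerous) and the regime where it is not, much as in the three-part splitting indicated after~\eqref{eq26}; getting the $S_4$ estimate down to $O(1/n)$ rather than just $O(\ln^2 n/n)$ is not needed, but verifying that no term is worse than $O(\ln^2 n/n^2)$ is where the care lies.
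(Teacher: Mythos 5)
Your proposal is correct and follows essentially the paper's own route: bound each $S_j$ termwise using $C_k=O(1/k)$ (from $q\in E$) and reduce to the elementary relations~\eqref{eq26} — for $S_2$ summing over $l$ first, for $S_3$ after the substitution $s=k+l$, and for $S_4$ obtaining $O((\ln n)^2/n^2)=O(1/n)$ — exactly as in the paper. Two minor remarks: the ``peeling off of excluded indices'' you invoke is superfluous for $S_2,S_3,S_4$ (it was needed only for $S_1$, where the exact cancellation~\eqref{eq23} over the unrestricted index set was used, and these sums contain no $C_{2n+k+l}$ factor), and in $S_3$ the paper sums over $k$ for fixed $s=k+l$, which via~\eqref{eq26} cleanly yields $\sum_s |s|^{-2}|2n+s|^{-1}=O(1/n)$ and avoids the region $m\approx-2n$ that your ``off a bounded set of $m$'' phrasing glosses over (your order of summation does work, but that region needs an explicit check).
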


\begin{proof}
It follows from~\eqref{eq25} and the definition of $S_{2}(n,q)$ that
\[
\left\vert -S_{2}(n,q)\right\vert =O(1)\biggl(\sum\limits_{k\neq0,-2n}%
\sum\limits_{l\neq0,-k}\frac{1}{\left\vert k(2n+k)l(k+l)^{2}\right\vert
}\biggr).
\]
Now using~\eqref{eq26}, we obtain
\[
\left\vert -S_{2}(n,q)\right\vert =O(1)\sum\limits_{k\neq0,-2n}\frac
{1}{\left\vert k^{2}(2n+k)\right\vert }=O\bigl(\frac{1}{n}\bigr).
\]
Now, let us prove~\eqref{eq27} for $j=3$. By~\eqref{eq25}, we have
\[
\left\vert -S_{3}(n,q)\right\vert =O(1)\sum\limits_{k\neq0,-2n}\sum
\limits_{l\neq0,-k}\frac{1}{\left\vert l(2n+k+l)k^{2}(k+l)\right\vert }.
\]
Making the substitution $s=k+l$, we obtain
\[
\left\vert -S_{3}(n,q)\right\vert =O(1)\sum\limits_{s\neq0,-2n}\sum
\limits_{k\neq0,s}\frac{1}{\left\vert s(2n+s)k^{2}(s-k)\right\vert }.
\]
Now using~\eqref{eq26}, we obtain
\[
\left\vert -S_{3}(n,q)\right\vert =O(1)\sum\limits_{s\neq0,2n}\frac
{1}{\left\vert s^{2}(2n+s)\right\vert }=O\bigl(\frac{1}{n}\bigr),
\]
that is, the proof of~\eqref{eq27} for $j=3$. Arguing as above, we conclude
that
\[
-S_{4}(n,q)=O(1)\biggl(\sum\limits_{k\neq0,-2n}\frac{1}{\left\vert
k(2n+k)\right\vert }\biggr)\biggl(\sum\limits_{s\neq0,-2n}\frac{1}{\left\vert
s(2n+s)\right\vert }\biggr).
\]
Therefore,~\eqref{eq26} implies that
\[
-S_{4}(n,q)=O\bigl(\frac{\left(  \ln n\right)  ^{2}}{n^{2}}\bigr)=O\bigl(\frac
{1}{n}\bigr).
\]
The lemma is proved.
\end{proof}

It follows from Proposition~\ref{p1} and Lemma~\ref{l3} that%
\begin{equation}
a_{2,1}(n^{2})=O\bigl(\frac{1}{n^{3}}\bigr), \label{eq28}%
\end{equation}
if $q\in E$. Now, let us estimate $a_{2,2}(n^{2})$. Arguing as in the proof
of~\eqref{eq22}, we see that $a_{2,2}(n^{2})=\dfrac{1}{4n^{2}}(H_{1}%
+H_{2}+H_{3}+H_{4})$, where%
\begin{align*}
H_{1}(n,q)  &  =\sum\limits_{k,k+l\neq0,-2n}\frac{C_{k}C_{l}C_{2n+k+l}%
}{k(k+l)},\qquad H_{2}(n,q)=\sum\limits_{k,k+l\neq0,-2n}\frac{C_{k}%
C_{l}C_{2n+k+l}}{(2n+k)(k+l)},\\
H_{3}(n,q)  &  =\sum\limits_{k,k+l\neq0,-2n}\frac{C_{k}C_{l}C_{2n+k+l}%
}{k(2n+k+l)},\qquad H_{4}(n,q)=\sum\limits_{k,k+l\neq0,2n}\frac{C_{k}%
C_{l}C_{2n+k+l}}{(2n+k)(2n+k+l)}.
\end{align*}
Therefore, arguing as in the proof of Lemma~\ref{l3}, we conclude that
\begin{equation}
a_{2,2}(n^{2},q)=O\bigl(\frac{1}{n^{3}}\bigr), \label{eq29}%
\end{equation}
if $q\in E$.

Now, using Lemmas~\ref{l1} and~\ref{l2}, \eqref{eq28} and~\eqref{eq29}, we
obtain the following main results of this section:

\begin{theorem}
\label{t1}If $q\in E$, then the following asymptotic formula holds:
\begin{equation}
\lambda_{n}=n^{2}-C_{2n}+\frac{1}{\pi}\int_{0}^{\pi}Q^{2}(x)\cos
2nxdx+D(n,q)+2\operatorname{Re}D(n,p)+O\bigl(\frac{1}{n^{3}}\bigr),
\label{eq30}%
\end{equation}
where $D(n,f)$ and $Q(x)$ are defined in Lemma~\ref{l1} and Lemma~\ref{l2}.
\end{theorem}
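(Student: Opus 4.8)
The plan is to extract from the exact identity \eqref{eq2} (with $m=2$) a three-term expansion of $\lambda_n$, to evaluate its second term $a_1$ at $n^2$ via Lemmas~\ref{l1} and~\ref{l2}, and to absorb all other contributions into $O(n^{-3})$ using $q\in E$. I would begin with an a priori bound: since $q\in E$ gives $C_{2n}=O(n^{-1})$, while Lemmas~\ref{l1}--\ref{l2} give $A_{1,n}(n^2)=O(n^{-1})$ and $B_{1,n}(n^2)=O(n^{-1})$ (the terms $D(n,q),D(n,p)$ being $n^{-1}$ times bounded integrals, and $B_{1,n}(n^2)$ being $O(n^{-1})$ after one integration by parts using $Q(0)=Q(\pi)=0$), formula \eqref{eq8} yields $\lambda_n-n^2=O(n^{-1})$. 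Hence, for every $n_1\neq 0,-2n$ and all large $n$,
\[
\bigl|\lambda_n-(n+n_1)^2\bigr|=\bigl|\,n_1(2n+n_1)-(\lambda_n-n^2)\,\bigr|\geq\frac{1}{2}\bigl|n_1(2n+n_1)\bigr|,
\]
so the denominators occurring in $a_1$ and $a_2$ keep the same order of magnitude when $\lambda_n$ is replaced by $n^2$.

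Taking $m=2$ in \eqref{eq2} and dividing by $(\Psi_n,\sin nx)$, which satisfies $|(\Psi_n,\sin nx)|>1/2$, gives
\[
\lambda_n=n^2-C_{2n}+a_1(\lambda_n)+a_2(\lambda_n)+O(|R_3|).
\]
Under $q\in E$ one has $R_3=O(n^{-3})$: this is proved just as the bound \eqref{eq7}, but now exploiting the extra decay $C_k=O(k^{-1})$ in the numerator of \eqref{eq6} together with the three denominators, the arising lattice sums being controlled via \eqref{eq26} exactly as in Lemma~\ref{l3}; alternatively one takes $m=3$ and uses the crude $R_4=O((\ln n/n)^4)=O(n^{-3})$, at the price of also checking $a_3(\lambda_n)=O(n^{-3})$ by the same sums. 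Repeating the manipulations behind \eqref{eq28} and \eqref{eq29} with $\lambda_n$ in place of $n^2$ --- legitimate by the comparison of denominators above --- gives $a_2(\lambda_n)=O(n^{-3})$. For $a_1$, writing
\[
\frac{1}{\lambda_n-(n+n_1)^2}-\frac{1}{n^2-(n+n_1)^2}=\frac{n^2-\lambda_n}{[\lambda_n-(n+n_1)^2][n^2-(n+n_1)^2]}
\]
and using $C_{n_1},C_{n_1+2n}=O(1)$, $n^2-\lambda_n=O(n^{-1})$, and $\sum_{n_1\neq0,-2n}[n_1(2n+n_1)]^{-2}=O(n^{-2})$ gives $a_1(\lambda_n)=a_1(n^2)+O(n^{-3})$.

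It remains to identify $a_1(n^2)$. By \eqref{eq3} with $\lambda_n$ replaced by $n^2$ and by the definitions \eqref{eq11}, $a_1(n^2)=A_{1,n}(n^2)-B_{1,n}(n^2)$; Lemma~\ref{l1} gives $A_{1,n}(n^2)=\frac{C_{2n}^2}{4n^2}+D(n,q)+2\operatorname{Re}D(n,p)$ with $\frac{C_{2n}^2}{4n^2}=O(n^{-4})$ since $q\in E$, and Lemma~\ref{l2} gives $B_{1,n}(n^2)=-\frac{1}{\pi}\int_0^\pi Q^2(x)\cos2nx\,dx$. Substituting these together with the bounds $a_2(\lambda_n)=O(n^{-3})$, $R_3=O(n^{-3})$, $a_1(\lambda_n)=a_1(n^2)+O(n^{-3})$ into the three-term expansion yields \eqref{eq30}. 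The only step requiring genuine work is the remainder bound $R_3=O(n^{-3})$ (equivalently $a_3(\lambda_n)=O(n^{-3})$) for $q\in E$; this is a lattice-sum estimate entirely parallel to Proposition~\ref{p1} and Lemma~\ref{l3}, after which the theorem is assembled by direct substitution.
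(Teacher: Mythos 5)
Your proposal is correct and follows essentially the same route as the paper's proof: the a priori bound $\lambda_n-n^2=O(1/n)$, the replacement $a_j(\lambda_n)=a_j(n^2)+O(n^{-3})$ via the comparison of denominators, the identification $a_1(n^2)=A_{1,n}(n^2)-B_{1,n}(n^2)$ through Lemmas~\ref{l1} and~\ref{l2} (with $C_{2n}^2/4n^2=O(n^{-4})$ absorbed into the error), and the bounds $a_2,a_3=O(n^{-3})$ as in \eqref{eq28}--\eqref{eq29}. The only caveat is that your primary variant ($m=2$ with the sharpened claim $R_3=O(n^{-3})$) is more delicate than you indicate, since the factor $\bigl(q\Psi_n,\sin(\cdot)\bigr)$ in \eqref{eq6} carries no $O(1/k)$ decay and the resulting lattice sums naturally produce logarithmic factors; your stated fallback --- taking $m=3$, using the crude $R_4=O((\ln n/n)^4)=O(n^{-3})$, and checking $a_3(n^2)=O(n^{-3})$ --- is exactly what the paper does.
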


\begin{proof}
It follows from~\eqref{eq8},~\eqref{eq25} and~\eqref{eq26} that $\lambda
_{n}=n^{2}+O\bigl(\dfrac{1}{n}\bigr)$. Using this, we easily get the estimate
\[
\sum_{\substack{k\neq0,-2n}}^{\infty}\biggl|\frac{1}{\lambda_{n}-(n+k)^{2}%
}-\frac{1}{n^{2}-(n+k)^{2}}\biggr|=O\bigl(\frac{1}{n^{3}}\bigr).
\]
By virtue of this estimate
\[
a_{j}\left(  \lambda_{n}\right)  =a_{j}\left(  n^{2}\right)  +O\bigl(\frac
{1}{n^{3}}\bigr),
\]
for $j=1,2,\ldots$. Therefore, it follows from~\eqref{eq2}-\eqref{eq6} that
\begin{equation}
\lambda_{n}=n^{2}-C_{2n}+a_{1}\left(  n^{2}\right)  +a_{2}\left(
n^{2}\right)  +a_{3}\left(  n^{2}\right)  +O\bigl(\frac{1}{n^{3}}\bigr),
\label{eq31}%
\end{equation}
where $a_{1}(n^{2})=A_{1,n}(n^{2})-B_{1,n}(n^{2})$, equivalently
\begin{equation}
a_{1}\left(  n^{2}\right)  =\frac{1}{\pi}\int_{0}^{\pi}Q^{2}(x)\cos
2nxdx+D(n,q)+2\operatorname{Re}D(n,p)+O\bigl(\frac{1}{n^{3}}%
\bigr) \label{eq32}%
\end{equation}
(see Lemma~\ref{l1}, Lemma~\ref{l2} and~\eqref{eq25}), and
\begin{equation}
a_{2}\left(  n^{2}\right)  =O\bigl(\frac{1}{n^{3}}\bigr) \label{eq33}%
\end{equation}
(see~\eqref{eq28} and~\eqref{eq29}). On the other hand, we have
\[
a_{3}\left(  n^{2}\right)  =\sum_{n_{1},n_{2},...,n_{k}=-\infty}^{\infty}%
\frac{C_{k}C_{l}C_{s}(C_{k+l+s}-C_{2n+k+l+s})}%
{k(2n+k)(k+l)(2n+k+l)(k+l+s)(2n+k+l+s)}.
\]
Moreover, arguing as in the proof of~\eqref{eq33}, we see that $a_{3}\left(
n^{2}\right)  =O\bigl(\dfrac{1}{n^{3}}\bigr)$. Thus, the proof of the theorem
follows from~\eqref{eq31}-\eqref{eq33}.
\end{proof}

Now, let us demonstrate the obtained results for potential~\eqref{eq9}. In
this case, by direct calculation, we have
\begin{equation}
q_{k}=\dfrac{1}{\pi}\int_{0}^{\pi}q(x)e^{-ikx}dx\,=\dfrac{1}{i\pi
k}((b-a)e^{-ikc}+a-(-1)^{k}b). \label{eq34}%
\end{equation}
Using the equalities $Q(\pi,n)=G(\pi,n)=0$ (see the proof of Lemma~\ref{l1}),
we obtain
\[
Q(x,n)=%
\begin{cases}
\dfrac{a}{i2n}(e^{i2nx}-1)-q_{-2n}x & \text{if }x\in\lbrack0,c]\\[2mm]%
\dfrac{b}{i2n}(e^{i2nx}-1)-q_{-2n}x+\pi q_{-2n} & \text{if }x\in(c,\pi]
\end{cases}
\]
and
\[
G(x,n)=%
\begin{cases}
\dfrac{a}{i2n}(e^{i2nx}-1)+\dfrac{\pi}{2}q_{-2n}x(e^{ix}-1) & \text{if }%
x\in\lbrack0,c]\\[2mm]%
\dfrac{b}{i2n}(e^{i2nx}-1)+\dfrac{\pi}{2}q_{-2n}x(e^{ix}-1)+\pi q_{-2n} &
\text{if }x\in(c,\pi],
\end{cases}
\]
where $Q(x,n)$, and $G(x,n)$\ are defined by~\eqref{eq12} and~\eqref{eq13}.
Therefore, by Lemma~\ref{l1}, we have
\begin{align*}
&  D_{1}(n,q)=\frac{i}{4\pi n}\int_{0}^{\pi}q(x)Q(x,n)e^{-i2nx}\,dx-\frac
{iQ_{n,0}}{4\pi n}\int_{0}^{\pi}q(x)e^{-i2nx}\,dx\\
&  \qquad=\frac{i}{4\pi n}\biggl(\int_{0}^{c}q(x)Q(x,n)e^{-i2nx}\,dx+\int
_{c}^{\pi}q(x)Q(x,n)e^{-i2nx}\,dx\biggr)-\frac{iQ_{n,0}q_{2n}}{4n},
\end{align*}
and
\begin{align*}
&  D_{2}(n,q)=\frac{i}{4\pi n}\int_{0}^{\pi}q(x)G(x,n)e^{-i2nx}\,dx\\
&  \qquad=\frac{i}{4\pi n}\biggl(\int_{0}^{c}q(x)G(x,n)e^{-i2nx}\,dx+\int
_{c}^{\pi}q(x)G(x,n)e^{-i2nx}\,dx\biggr).
\end{align*}
We first calculate the terms in $D_{1}(n,q)$. Using $(b-a)c=b\pi$ or
$ac=b(c-\pi)$\ by~\eqref{eq10} and substituting $q_{-2n}=\dfrac
{i(b-a)(e^{i2nc}-1)}{2\pi n}$ by~\eqref{eq34},\ we have
\begin{align}
\pi Q_{n,0}  &  =\int_{0}^{c}\bigl(\dfrac{a}{i2n}(e^{i2nx}-1)-q_{-2n}%
x\bigr)dx+\int_{c}^{\pi}\bigl(\dfrac{b}{i2n}(e^{i2nx}-1)-q_{-2n}x+\pi
q_{-2n}\bigr)dx\nonumber\\
&  =\frac{(b-a)(e^{i2nc}-1)}{4n^{2}}+\pi(\frac{\pi}{2}-c)q_{-2n}%
=\frac{(b-a)(e^{i2nc}-1)}{4n^{2}}-\frac{i\pi(b+a)(e^{i2nc}-1)}{4n},\nonumber
\end{align}%
\[
-\frac{i}{4n}Q_{n,0}q_{2n}=\frac{i(b^{2}-a^{2})(1-\cos2nc)}{16\pi n^{3}}%
-\frac{(b-a)^{2}(1-\cos2nc)}{16\pi^{2}n^{4}},
\]%
\begin{align*}
&  \int_{0}^{c}q(x)Q(x,n)e^{-i2nx}\,dx=\int_{0}^{c}a\biggl(\dfrac{a}%
{i2n}(e^{i2nx}-1)-q_{-2n}x\biggr)e^{-i2nx}\,dx\\
&  \qquad=\dfrac{a^{2}}{i2n}\biggl(c+\dfrac{e^{-i2nc}-1}{i2n}\biggr)+\dfrac
{aq_{-2n}}{i2n}\biggl(ce^{-i2nc}+\dfrac{e^{-i2nc}-1}{i2n}\biggr),
\end{align*}
and
\begin{align*}
&  \int_{c}^{\pi}q(x)Q(x,n)e^{-i2nx}\,dx=\int_{c}^{\pi}b\biggl(\dfrac{b}%
{i2n}(e^{i2nx}-1)-q_{-2n}x+\pi q_{-2n}\biggr)e^{-i2nx}\,dx\\
&  \qquad=\dfrac{b^{2}}{i2n}\biggl(\pi-c-\dfrac{e^{-i2nc}-1}{i2n}%
\biggr)+\dfrac{bq_{-2n}}{i2n}\biggl(\pi-ce^{-i2nc}-\dfrac{e^{-i2nc}-1}%
{i2n}\biggr)+\dfrac{b\pi q_{-2n}}{i2n}(e^{-i2nc}-1).
\end{align*}
Therefore,
\begin{align*}
&  D_{1}(n,q)=\frac{i}{4\pi n}\int_{0}^{\pi}q(x)Q(x,n)e^{-i2nx}\,dx-\frac
{iQ_{n,0}}{4\pi n}\int_{0}^{\pi}q(x)e^{-i2nx}\,dx\\
&  \qquad=\frac{i}{4\pi n}\biggl(\frac{(b^{2}-a^{2})(e^{-i2nc}-1)}{4n^{2}%
}-\frac{ab\pi}{i2n}+\frac{(b-a)(e^{-i2nc}-1)q_{-2n}}{4n^{2}}\biggr)-\frac
{iQ_{n,0}q_{2n}}{4n}%
\end{align*}
and by~\eqref{eq34}, we have
\begin{equation}
D_{1}(n,q)=-\frac{ab}{8n^{2}}+\frac{(b^{2}-a^{2})\sin2nc}{16\pi n^{3}}%
-\frac{(b-a)^{2}(1-\cos2nc)}{8\pi^{2}n^{4}}. \label{eq35}%
\end{equation}
Now let us calculate the terms in $D_{2}(n,q)$:%

\begin{align*}
&  \int_{0}^{c}q(x)G(x,n)e^{-i2nx}\,dx=\int_{0}^{c}a\biggl(\dfrac{a}%
{i2n}(e^{i2nx}-1)+\dfrac{\pi}{2}q_{-2n}x(e^{ix}-1)\biggr)e^{-i2nx}\,dx\\
&  \qquad=\dfrac{a^{2}}{i2n}\int_{0}^{c}(1-e^{-i2nx})\,dx+a\dfrac{\pi}%
{2}q_{-2n}\int_{0}^{c}xe^{-i(2n-1)x}\,dx-a\dfrac{\pi}{2}q_{-2n}\int_{0}%
^{c}xe^{-i2nx}\,dx\\
&  \qquad=\dfrac{a^{2}}{i2n}\biggl(c+\dfrac{e^{-i2nc}-1}{i2n}\biggr)-\dfrac
{a\pi q_{-2n}}{i2(2n-1)}\biggl(ce^{-i(2n-1)c}+\dfrac{e^{-i(2n-1)c}-1}%
{i(2n-1)}\biggr)\\
&  \qquad+\dfrac{a\pi q_{-2n}}{i4n}\biggl(ce^{-i2nc}+\dfrac{e^{-i2nc}-1}%
{i2n}\biggr)
\end{align*}
and
\begin{align*}
&  \int_{c}^{\pi}q(x)G(x,n)e^{-i2nx}\,dx=\int_{c}^{\pi}b\biggl(\dfrac{b}%
{i2n}(e^{i2nx}-1)+\dfrac{\pi}{2}q_{-2n}x(e^{ix}-1)+\pi q_{-2n}\biggr)e^{-i2nx}%
\,dx\\
&  \qquad=\dfrac{b^{2}}{i2n}\int_{c}^{\pi}(1-e^{-i2nx})\,dx+b\dfrac{\pi}%
{2}q_{-2n}\int_{c}^{\pi}xe^{-i(2n-1)x}\,dx-b\dfrac{\pi}{2}q_{-2n}\int_{c}%
^{\pi}xe^{-i2nx}\,dx+b\pi q_{-2n}\int_{c}^{\pi}e^{-i2nx}\,dx\\
&  \qquad=\dfrac{b^{2}}{i2n}\biggl(\pi-c-\dfrac{e^{-i2nc}-1}{i2n}%
\biggr)+\dfrac{b\pi q_{-2n}}{i2(2n-1)}\biggl(\pi+ce^{-i(2n-1)c}+\dfrac
{e^{-i(2n-1)c}+1}{i(2n-1)}\biggr)\\
&  \qquad+\dfrac{b\pi q_{-2n}}{i4n}\biggl(\pi-ce^{-i2nc}-\dfrac{e^{-i2nc}%
-1}{i2n}\biggr)+\dfrac{b\pi q_{-2n}}{i2n}(e^{-i2nc}-1)
\end{align*}
give
\begin{align*}
&  D_{2}(n,q)=\frac{i}{4\pi n}\int_{0}^{\pi}q(x)G(x,n)e^{-i2nx}\,dx\\
&  \qquad=\frac{i}{4\pi n}\biggl(\frac{(b^{2}-a^{2})(e^{-i2nc}-1)}{4n^{2}%
}-\frac{ab\pi}{i2n}+\frac{b\pi^{2}(1+e^{-i(2n-1)c})q_{-2n}}{i2(2n-1)}%
+\frac{b\pi^{2}(1-e^{-i2nc})q_{-2n}}{i4n}\\
&  \qquad-\frac{\pi(b-a)(1-e^{-i2nc})q_{-2n}}{8n^{2}}-\frac{\pi q_{-2n}%
(b+a+(b-a)e^{-i(2n-1)c})}{2(2n-1)^{2}}\biggr).
\end{align*}
Using~\eqref{eq34}, we have
\begin{align}
&  D_{2}(n,q)=-\frac{ab}{8n^{2}}+\frac{i(b^{2}-a^{2})(e^{-i2nc}-1)}{16\pi
n^{3}}-\frac{ib(b-a)(1-e^{-i2nc})(e^{i2nc}+e^{ic})}{16n^{2}(2n-1)}%
+\frac{ib(b-a)(\cos2nc-1)}{16n^{3}}\nonumber\\
&  \qquad+\frac{(b-a)^{2}(\cos2nc-1)}{32\pi n^{4}}+\frac{(b-a)(e^{-i2nc}%
-1)[(b+a)e^{i2nc}+(b-a)e^{ic}]}{16\pi n^{2}(2n-1)^{2}}. \label{eq36}%
\end{align}
Therefore, using~\eqref{eq35},~\eqref{eq36}, Lemma~\ref{l1}, and simplifying a
bit, we obtain
\begin{align}
&  D(n,q)=D_{1}(n,q)+D_{2}(n,q)=-\frac{ab}{4n^{2}}+\frac{(b^{2}-a^{2})\sin
2nc}{8\pi n^{3}}-\frac{(b-a)^{2}(1-\cos2nc)}{8\pi^{2}n^{4}}\nonumber\\
&  \qquad+\frac{i(b^{2}-a^{2})(\cos2nc-1)}{16\pi n^{3}}+\frac{ib(b-a)(\cos
2nc-1)}{16n^{3}}-\frac{ib(b-a)(1-e^{-i2nc})(e^{i2nc}+e^{ic})}{16n^{2}%
(2n-1)}\nonumber\\
&  \qquad+\frac{(b-a)^{2}(\cos2nc-1)}{32\pi n^{4}}+\frac{(b-a)(e^{-i2nc}%
-1)[(b+a)e^{i2nc}+(b-a)e^{ic}]}{16\pi n^{2}(2n-1)^{2}}. \label{eq37}%
\end{align}

Now, let us calculate $\operatorname{Re}D(n,p)$, where $p(x)$ is defined in
Lemma~\ref{l1}. For the Kronig-Penney potential, $p(x)$ becomes $p(x)=b$ for
$x\in\lbrack0,\pi-c)$ and $p(x)=a$ for $x\in\lbrack\pi-c,\pi]$. Therefore, we
have
\begin{align*}
&  D_{1}(n,p)=\frac{i}{4\pi n}\int_{0}^{\pi}p(x)Q(x,n)e^{-i2nx}\,dx-\frac
{iQ_{n,0}}{4\pi n}\int_{0}^{\pi}p(x)e^{-i2nx}\,dx\\
&  \qquad=\frac{i}{4\pi n}\biggl(\int_{0}^{\pi-c}p(x)Q(x,n)e^{-i2nx}%
\,dx+\int_{\pi-c}^{\pi}p(x)Q(x,n)e^{-i2nx}\,dx\biggr)-\frac{iQ_{n,0}p_{2n}%
}{4n}%
\end{align*}
and
\[
D_{2}(n,p)=\frac{i}{4\pi n}\biggl(\int_{0}^{\pi-c}p(x)G(x,n)e^{-i2nx}%
\,dx+\int_{\pi-c}^{\pi}p(x)G(x,n)e^{-i2nx}\,dx\biggr).
\]
By direct calculations, similar to the calculations above for the terms in
$D_{1}(n,q)$ and $D_{2}(n,q)$, we obtain
\[
-\frac{i}{4n}Q_{n,0}p_{2n}=-\frac{i(b^{2}-a^{2})(e^{i2nc}-1)^{2}}{32\pi n^{3}%
}+\frac{(b-a)^{2}(e^{i2nc}-1)^{2}}{32\pi^{2}n^{4}},
\]%
\begin{equation}
D_{1}(n,p)=\frac{ab}{8n^{2}}-\frac{i(b-a)(b+3a)(e^{i2nc}-1)^{2}}{32\pi n^{3}%
}+\frac{(b-a)^{2}(e^{i2nc}-1)^{2}}{16\pi^{2}n^{4}}, \label{eq38}%
\end{equation}
and
\begin{align}
&  D_{2}(n,p)=\frac{ab}{8n^{2}}+\frac{i(b-a)(1-e^{i(2n-1)c})(e^{i2nc}%
-1)}{16n^{2}(2n-1)}+\frac{(b^{2}-a^{2}+(b-a)^{2}e^{i(2n-1)c})(e^{i2nc}%
-1)}{16\pi n^{2}(2n-1)^{2}}\nonumber\\
&  \qquad-\frac{ia(b-a)(e^{i2nc}-1)^{2}}{32n^{3}}+\frac{(b-a)^{2}%
(e^{i2nc}-1)^{2}}{64\pi n^{4}}+\frac{ia(b-a)(e^{i2nc}-1)^{2}}{16\pi n^{3}}.
\label{eq39}%
\end{align}
Thus, using~\eqref{eq38},~\eqref{eq39}, Lemma~\ref{l1}, and simplifying a bit,
we write
\begin{align}
&  D(n,p)=D_{1}(n,p)+D_{2}(n,p)=\frac{ab}{4n^{2}}+\frac{i(a^{2}-b^{2}%
)(e^{i2nc}-1)^{2}}{32\pi n^{3}}\nonumber\\
&  \qquad+\frac{i(b-a)(1-e^{i(2n-1)c})(e^{i2nc}-1)}{16n^{2}(2n-1)}%
-\frac{ia(b-a)(e^{i2nc}-1)^{2}}{32n^{3}}\nonumber\\
&  \qquad+\frac{(b-a)^{2}(e^{i2nc}-1)^{2}}{64\pi n^{4}}+\frac{(b-a)^{2}%
(e^{i2nc}-1)^{2}}{16\pi^{2}n^{4}}+\frac{(b^{2}-a^{2}+(b-a)^{2}e^{i(2n-1)c}%
)(e^{i2nc}-1)}{16\pi n^{2}(2n-1)^{2}}. \label{eq40}%
\end{align}
Now, we calculate $B_{1,n}(n^{2})$\ defined by~\eqref{eq21}. Expressing $Q(x)$
defined in Lemma~\ref{l2} for the Kronig-Penney potential as%
\[
Q(x)=%
\begin{cases}
ax & \text{if }x\in\lbrack0,c]\\[2mm]%
bx-b\pi & \text{if }x\in(c,\pi],
\end{cases}
\]
by direct calculation, we obtain
\begin{equation}
B_{1,n}(n^{2})=-\frac{1}{\pi}\int_{0}^{\pi}Q^{2}(x)\cos2nxdx=\frac{ab\cos
2nc}{2n^{2}}-\frac{(b^{2}-a^{2})\sin2nc}{4\pi n^{3}}. \label{eq41}%
\end{equation}
Therefore, by $C_{k}=\dfrac{a-b}{\pi k}\sin kc$,~\eqref{eq37},~\eqref{eq40},
and~\eqref{eq41}, we can calculate:
\begin{align*}
&  -C_{2n}+\frac{1}{\pi}\int_{0}^{\pi}Q^{2}(x)\cos
2nxdx+D(n,q)+2\operatorname{Re}D(n,p)\\
&  \qquad=-\frac{a-b}{2\pi n}\sin2nc-\frac{ab\cos2nc}{2n^{2}}+\frac{ab}%
{4n^{2}}+O\bigl(\frac{1}{n^{3}}\bigr).
\end{align*}
Thus, by~\eqref{eq30}, we have the following result:

\begin{theorem}
\label{t2} If $q(x)$ has the form~\eqref{eq9}, then the following asymptotic
formula holds:
\[
\lambda_{n}=n^{2}+\frac{b-a}{2\pi n}\sin2nc+\frac{ab(1-2\cos2nc)}{4n^{2}%
}+O\bigl(\frac{1}{n^{3}}\bigr).
\]

\end{theorem}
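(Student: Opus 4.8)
The plan is to instantiate the general sharp asymptotic formula \eqref{eq30} from Theorem~\ref{t1} for the specific potential \eqref{eq9}, and simply collect terms by order of $n$. All the hard analytic work has already been done: Theorem~\ref{t1} gives $\lambda_n = n^2 - C_{2n} + \frac{1}{\pi}\int_0^\pi Q^2(x)\cos 2nx\,dx + D(n,q) + 2\operatorname{Re}D(n,p) + O(n^{-3})$, and for the Kronig--Penney potential each ingredient has been computed explicitly in \eqref{eq34}--\eqref{eq41}. So the proof is essentially a verification that these pieces sum up as claimed.

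First I would record the elementary facts needed: since $q$ has the form \eqref{eq9} with jump relations \eqref{eq10}, it is of bounded variation, hence $q\in E$ and Theorem~\ref{t1} applies. Next I would recall the value $C_k = \frac{a-b}{\pi k}\sin kc$ (which follows from \eqref{eq34} since $C_k = \operatorname{Re} q_k$, using $ac + b(\pi-c)=0$), giving in particular $-C_{2n} = \frac{b-a}{2\pi n}\sin 2nc$, the leading term of order $n^{-1}$. Then I would invoke \eqref{eq41} for $B_{1,n}(n^2) = -\frac{1}{\pi}\int_0^\pi Q^2(x)\cos 2nx\,dx = \frac{ab\cos 2nc}{2n^2} - \frac{(b^2-a^2)\sin 2nc}{4\pi n^3}$, so $\frac{1}{\pi}\int_0^\pi Q^2\cos 2nx\,dx = -\frac{ab\cos 2nc}{2n^2} + O(n^{-3})$, contributing to the $n^{-2}$ term. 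Finally I would use \eqref{eq37} for $D(n,q)$ and \eqref{eq40} for $D(n,p)$; keeping only terms through order $n^{-2}$, one has $D(n,q) = -\frac{ab}{4n^2} + O(n^{-3})$ and $\operatorname{Re}D(n,p) = \operatorname{Re}\bigl(\frac{ab}{4n^2}\bigr) + O(n^{-3}) = \frac{ab}{4n^2} + O(n^{-3})$, since every other summand in \eqref{eq37} and \eqref{eq40} carries a factor $n^{-3}$ or smaller (note $\frac{1}{n^2(2n-1)} = O(n^{-3})$ and $\frac{1}{n^2(2n-1)^2} = O(n^{-4})$).

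Assembling these: $-C_{2n} + \frac{1}{\pi}\int_0^\pi Q^2\cos 2nx\,dx + D(n,q) + 2\operatorname{Re}D(n,p) = \frac{b-a}{2\pi n}\sin 2nc - \frac{ab\cos 2nc}{2n^2} - \frac{ab}{4n^2} + \frac{ab}{2n^2} + O(n^{-3}) = \frac{b-a}{2\pi n}\sin 2nc + \frac{ab}{4n^2} - \frac{ab\cos 2nc}{2n^2} + O(n^{-3})$, which is exactly $\frac{b-a}{2\pi n}\sin 2nc + \frac{ab(1-2\cos 2nc)}{4n^2} + O(n^{-3})$. Substituting into \eqref{eq30} yields the stated formula. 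This last bookkeeping step is precisely the displayed computation just before the theorem statement in the excerpt, so the proof reduces to citing \eqref{eq30}, \eqref{eq37}, \eqref{eq40}, \eqref{eq41} and that computation.

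The only real obstacle is the arithmetic of extracting the $n^{-2}$ coefficient correctly: one must be careful that $2\operatorname{Re}D(n,p)$ contributes $+\frac{ab}{2n^2}$ (a real term, so the real part is trivial here), that $D(n,q)$ contributes $-\frac{ab}{4n^2}$, and that these combine with the $-\frac{ab}{4n^2}$ hidden inside... wait, rather: $D_1(n,q)$ and $D_2(n,q)$ each give $-\frac{ab}{8n^2}$ per \eqref{eq35}--\eqref{eq36}, summing to $-\frac{ab}{4n^2}$, while $D_1(n,p)$ and $D_2(n,p)$ each give $+\frac{ab}{8n^2}$ per \eqref{eq38}--\eqref{eq39}, summing to $+\frac{ab}{4n^2}$, doubled to $+\frac{ab}{2n^2}$. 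Tracking the $\cos 2nc$ versus constant split between $-C_{2n}$, $B_{1,n}$ and the $D$-terms, and confirming that all genuinely oscillatory error contributions are $O(n^{-3})$, is the part that demands care but involves no new idea.
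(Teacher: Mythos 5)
Your proposal is correct and is essentially the paper's own argument: the paper proves Theorem~\ref{t2} by exactly this substitution of the explicit Kronig--Penney computations \eqref{eq37}, \eqref{eq40}, \eqref{eq41} (together with $C_{2n}=\frac{a-b}{2\pi n}\sin 2nc$) into \eqref{eq30}, via the displayed bookkeeping identity immediately preceding the theorem. Your order-of-magnitude accounting of which terms survive at $O(n^{-2})$ and which are absorbed into $O(n^{-3})$ matches the paper's.
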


\section{Estimates for the Small Eigenvalues}

\label{Sec:3} For simplicity of reading, we first give the main ideas of the
proofs of the main results of this section. To give estimates for the small
Dirichlet eigenvalues, first, we prove (See Theorem~\ref{t3}) that Dirichlet
eigenvalues satisfy the equation
\begin{equation}
\lambda-n^{2}+C_{2n}-\sum\limits_{k=1}^{\infty}a_{k}(\lambda)=0, \label{eq42}%
\end{equation}
in the set $I_{n}:=[n^{2}-M,n^{2}+M]$, where $M=\max\{|a|,b\}$, and the
infinite series $a_{k}$ is defined by~\eqref{eq5}, if the following condition holds:

\begin{condition}
\label{c1} To consider the $n$th eigenvalue $\lambda_{n}$, for $n>1$, we
assume that the condition $M<(2n-1)/2$ holds. In case $n=1$, we assume that
the condition $M\leq1/2$ holds.
\end{condition}

Then, to use numerical methods, we take finite sums instead of the infinite
series in~\eqref{eq42}. To approximate the roots of equation~\eqref{eq42}, we
use the fixed point iteration. Using the Banach fixed point theorem, we prove
that the equation containing the finite sums has a unique solution in the
appropriate set $I_{n}$ (see Theorem~\ref{t4}). Moreover, we give error
analysis (see Theorem~\ref{t4} and Theorem~\ref{t5}) and present a numerical example.

Now, we state some preliminary facts. It is well known that the spectrum of
the operator $L(q)$ is discrete and for large enough $n$, there is one
eigenvalue (counting multiplicity) in the neighborhood of $n^{2}$. See the
basic and detailed classical results in~\cite{1, 2, 4, 5} and references
therein. The eigenvalues of the operator $L(0)$ are $n^{2}$, for
$n\in\mathbb{N}$, where $\mathbb{N}$ is the set of positive integers, and all
the eigenvalues of $L(0)$ are simple.

It is also known that~\cite{6}, $|\lambda_{n}-n^{2}|\leq M$, for $n\geq1$.
Therefore, we have
\begin{equation}
n^{2}-M\leq\lambda_{n}\leq n^{2}+M, \label{eq43}%
\end{equation}
for $n\geq1$. If $k\neq\pm n$, then $|\lambda_{n}-k^{2}|\geq|n^{2}%
-k^{2}|-M=|n-k||n+k|-M\geq(2n-1)-M$, for $n\geq1$ and under the assumptions
about $M$ given in Condition~\ref{c1}. In particular, if $n=1$, we have
$|\lambda_{1}|\leq1+M$ and $|\lambda_{1}-k^{2}|\geq||\lambda_{1}|-k^{2}%
|\geq4-|\lambda_{1}|\geq3-M$, for $|k|\geq2$. Besides, if $n\geq2$, we have
$|\lambda_{n}|\geq|\lambda_{2}|\geq4-M$ and $|\lambda_{n}-k^{2}|\geq
||\lambda_{2}|-k^{2}|\geq|\lambda_{2}|-1\geq3-M$, for $k\neq\pm n$.

We stress that, the iteration formula~\eqref{eq2} was used in~\cite{13} for
large eigenvalues to obtain asymptotic formulas. In this section, we find
conditions on potential~\eqref{eq9} for which the iteration
formula~\eqref{eq2} is also valid for the small eigenvalues, as $m$ tends to
infinity. We also note that, it is not easy to give such conditions, there are
many technical calculations.

We remind that, since the potential $q$ is of the form~\eqref{eq9}, we have
$C_{k}=\dfrac{a-b}{\pi k}\sin kc$ and $C_{-k}=C_{k}$, for $k=1,2,\ldots$. Now,
in order to give the main results, we prove the following lemma. Without loss
of generality, we assume that $\Psi_{n}(x)$ is a normalized eigenfunction
corresponding to the eigenvalue $\lambda_{n}$.

\begin{lemma}
\label{l4} If the assumptions about $M$ given in Condition~\ref{c1}
hold,\ then the statements

\textbf{(a)} $\lim_{m\rightarrow\infty}R_{m}(\lambda)=0,$ and \textbf{(b)}
$|(\Psi_{n},\sin nx)|>0$

are valid, where\ $R_{m}(\lambda)$ is defined by~\eqref{eq6}.

\begin{proof}
\textbf{ (a)} Since $||\Psi_{n}||=1$ and $||\sin kx||=\sqrt{\pi}/\sqrt{2}$, by
the Schwarz inequality, we have $|(q\Psi_{n},\sin kx)|\leq M\sqrt{\pi}%
/\sqrt{2}$. Considering the number of terms and the greatest summands of
$R_{2m}(\lambda)$ in absolute value, by~\eqref{eq43}, we obtain,
\[
|R_{2m}(\lambda_{1})|<\frac{4^{m}|C_{1}||C_{2}|^{2m}M\sqrt{\pi}/\sqrt{2}%
}{(2n-1-M)^{2m+1}}<\frac{(b-a)}{\sqrt{2\pi}}(\frac{2}{\pi})^{2m}.
\]
Therefore, $\lim_{m\rightarrow\infty}R_{m}(\lambda)=0$.

\textbf{(b)} Suppose the contrary, $(\Psi_{n},\sin nx)=0$. Since the system of
root functions $\{\sqrt{2}\sin kx/\sqrt{\pi}:k\in\mathbb{N}\}$ of $L(0)$ forms
an orthonormal basis for $L_{2}[0,\pi]$, we have the decomposition
\[
\dfrac{\pi}{2}\Psi_{n}=(\Psi_{n},\sin nx)\sin nx+\sum_{k\in\mathbb{N},k\neq
n}\left(  \Psi_{n},\sin kx\right)  \sin kx
\]
for the normalized eigenfunction $\Psi_{n}$ corresponding to the eigenvalue
$\lambda_{n}$ of $L(q)$. By Parseval's equality, we obtain
\begin{equation}
\sum_{k\in\mathbb{N},k\neq n}|\left(  \Psi_{n},\sin kx\right)  |^{2}=\frac
{\pi}{2} \label{eq44}%
\end{equation}
On the other hand, using the relation $(\lambda_{N}-n^{2})(\Psi_{N},\sin
nx)=(q\Psi_{N},\sin nx)$, the Bessel inequality and~\eqref{eq43}, we obtain
\begin{align*}
&  \sum_{k\in\mathbb{N},k\neq n}|\left(  \Psi_{n},\sin kx\right)  |^{2}%
=\sum_{k\in\mathbb{N},k\neq n}\frac{|(q\Psi_{n},\sin kx)|^{2}}{|\lambda
_{n}-k^{2}|^{2}}\\
&  \qquad\leq\frac{1}{(2n-1-M)^{2}}\sum_{k\in\mathbb{Z},k\neq\pm n}|(q\Psi
_{n},\sin kx)|^{2}<\frac{\pi(M)^{2}}{2(2n-1-M)^{2}}<\frac{\pi}{2}.
\end{align*}
which contradicts Parseval's equality~\eqref{eq44}.
\end{proof}
\end{lemma}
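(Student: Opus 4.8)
The plan is to treat the two assertions separately; (b) is a short argument by contradiction using the spectral gap, and (a) is a geometric-decay estimate on the remainder.

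For (b), suppose $(\Psi_n,\sin nx)=0$. Expanding the normalized $\Psi_n$ in the orthonormal basis $\{\sqrt{2/\pi}\,\sin kx:k\geq1\}$ and using Parseval gives $\sum_{k\geq1,\,k\neq n}|(\Psi_n,\sin kx)|^2=\pi/2$. On the other hand, from $(\lambda_n-k^2)(\Psi_n,\sin kx)=(q\Psi_n,\sin kx)$ one may solve $(\Psi_n,\sin kx)=(q\Psi_n,\sin kx)/(\lambda_n-k^2)$ for $k\neq n$ (the denominator being nonzero by the gap estimate recalled just before the lemma). Using $|\lambda_n-k^2|\geq 2n-1-M$ --- and, when $n=1$, the sharper $|\lambda_n-k^2|\geq 3-M$, valid since there the running $k$ satisfy $k\geq2$ --- together with $\sum_k|(q\Psi_n,\sin kx)|^2=\tfrac\pi2\|q\Psi_n\|^2\leq\tfrac\pi2M^2$, we obtain
\[
\sum_{k\geq1,\,k\neq n}|(\Psi_n,\sin kx)|^2\ \leq\ \frac{\pi M^2}{2(2n-1-M)^2}\ \ (n\geq2),\qquad \leq\ \frac{\pi M^2}{2(3-M)^2}\ \ (n=1).
\]
Condition~\ref{c1} makes both right-hand sides strictly smaller than $\pi/2$: for $n\geq2$ because $M<(2n-1)/2$ forces $M<2n-1-M$, and for $n=1$ because $M\leq1/2$ and $3-M\geq5/2$ give a bound at most $\pi/50$. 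This contradicts Parseval, so $(\Psi_n,\sin nx)\neq0$.

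For (a), I would estimate the series~\eqref{eq6}. In every surviving term the partial sums $\sigma_j:=n_1+\cdots+n_j$ avoid $0$ and $-2n$ (the rest carry a factor $C_0=0$), so $n+\sigma_j\neq\pm n$ and hence each of the $m+1$ denominators satisfies $|\lambda_n-(n+\sigma_j)^2|\geq 2n-1-M>0$; moreover $|(q\Psi_n,\sin(n+\sigma_{m+1}))|\leq\|q\|_\infty\|\Psi_n\|\,\|\sin(\cdot)\|=M\sqrt{\pi/2}$. What remains is to show that the remaining multi-index sum decays geometrically in $m$. The clean way to see this is: after the antisymmetric bookkeeping ($C_{-k}=C_k$, $c_{-k}=-c_k$, $c_0=0$) underlying~\eqref{eq2}-\eqref{eq6}, $R_{m+1}$ equals a fixed multiple of $\sum_{k\neq\pm n}C_{n-k}(W^{m}c)_{k}$, where $c$ is the (antisymmetrized) sine-Fourier coefficient sequence of $\Psi_n$ and $W=D\mathcal C$ is the composition of the diagonal resolvent $D=\bigl(1/(\lambda_n-k^2)\bigr)_{k\neq\pm n}$ with convolution $\mathcal C$ by $(C_k)$. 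Since $\mathcal C$ is multiplication by $q$ in Fourier space, $\|\mathcal C\|=\|q\|_\infty=M$, while $\|D\|\leq 1/(2n-1-M)$; so Condition~\ref{c1} gives $\|W\|\leq M/(2n-1-M)<1$, whence $R_{m+1}=O(\|W\|^{m})\to0$.

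The main obstacle is precisely this estimate, especially if (as in the paper) one carries it out directly on the explicit series~\eqref{eq6}. One cannot bound it term by term: $C_k=\tfrac{a-b}{\pi k}\sin kc$ gives $\sum_k|C_k|=\infty$, and in fact for $M$ close to the threshold $(2n-1)/2$ the index-by-index sum of absolute values already exceeds $1$. One must instead use the quadratic growth of the denominators $\lambda_n-(n+\sigma_j)^2$ against the decay $|C_{n_j}|\leq 2M/(\pi|n_j|)$ --- i.e. the cancellation encoded in the fact that convolution by $(C_k)$ has operator norm $\|q\|_\infty$ rather than $\sum_k|C_k|$. The tightest case is $n=1$, where $2n-1-M=1-M$ may equal $M$: here the single dangerous contribution is the mode $k=0$, and one must observe that this mode is annihilated on the antisymmetric subspace carrying $c$ and $W$, so that the effective resolvent bound improves to $1/(3-M)$, giving $\|W\|\leq M/(3-M)\leq 1/5<1$. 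This is where the ``many technical calculations'' referred to in the text are needed.
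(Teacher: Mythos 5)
Your part (b) is essentially the paper's own proof: assume $(\Psi_n,\sin nx)=0$, use Parseval, the relation $(\lambda_n-k^2)(\Psi_n,\sin kx)=(q\Psi_n,\sin kx)$, Bessel and the gap estimate; you are in fact a bit more careful than the text at $n=1$, where you invoke the sharper gap $3-M$ so that the case $M=1/2$ is not borderline. Part (a), however, takes a genuinely different route. The paper stays with the explicit multi-index series \eqref{eq6} and bounds it by ``the number of terms and the greatest summands,'' using the Kronig--Penney decay $|C_k|\le (b-a)/(\pi|k|)$ to reach the ratio $2/\pi$ per pair of iterations; you instead regroup \eqref{eq6} as a matrix entry of a power of the operator ``cut-off resolvent composed with convolution by $(C_k)$'' applied to the $\ell_2$ vector $\bigl((q\Psi_n,\sin kx)\bigr)_k$, and bound that operator in $\ell_2$ norm by $\|q\|_\infty/\min_{k\neq 0,\pm n}|\lambda_n-k^2|\le M/(2n-1-M)$, with the improvement to $M/(3-M)$ for $n=1$ because the iteration stays in the antisymmetric (odd) subspace where the $k=0$ mode is absent. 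This buys generality and robustness: it uses only $\|q\|_\infty=M$ and Condition~\ref{c1} (not the specific form \eqref{eq9}), and it cleanly handles the borderline case $n=1$, $M=1/2$, which the uniform bound $|\lambda_1-k^2|\ge 1-M$ alone would not settle; the paper's computation, by contrast, is explicit for the Kronig--Penney coefficients but its count of $4^m$ ``terms'' in an infinite multi-index sum is exactly the informal step your operator bound avoids. What your sketch still owes the reader is the key identity itself: you assert that $R_{m+1}$ is such an iterated-operator expression, and while this does follow by a straightforward regrouping of the sums in \eqref{eq6} (the innermost sum over $n_{m+1}$ is one application of convolution with $(C_k)$ followed by the resolvent restricted to $k\neq\pm n$), that regrouping, together with the check that convolution by the symmetric sequence $(C_k)$ and the even diagonal resolvent preserve antisymmetry, must be written out for the proof to be complete. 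Two side remarks are inaccurate but harmless: the constraints $n_1+\dots+n_s\neq 0,-2n$ are part of the definition of $R_{m+1}$ (they arise from splitting off the terms hitting the modes $\pm n$, not from $C_0=0$), and the claim that a term-by-term bound ``cannot'' work is an overstatement, since the $1/|k|$ decay of $C_k$ against the quadratically growing denominators can be exploited directly, as the paper attempts.
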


Before giving the main results, we calculate the number $-C_{2n}+A_{1,n}%
(n^{2})-B_{1,n}(n^{2})$ by~\eqref{eq14},~\eqref{eq37},~\eqref{eq40},
and~\eqref{eq41}:
\begin{align}
&  -C_{2n}+A_{1,n}(n^{2})-B_{1,n}(n^{2})=\frac{b-a}{2\pi n}\sin2nc+\frac
{ab(1-2\cos2nc)}{4n^{2}}+\frac{(b-a)(\cos2nc-1)(2a\sin2nc+ib)}{16n^{3}%
}\nonumber\\
&  \qquad+\frac{(b^{2}-a^{2})(2\sin2nc(2+\cos2nc)+i(\cos2nc-1))}{16\pi n^{3}%
}+\frac{(b-a)^{2}\cos2nc(\cos2nc-1)}{16\pi^{2}n^{4}}\nonumber\\
&  \qquad-\frac{-(b-a)[2(\sin2nc+\sin(2n-1)c-\sin(4n-1)c)+ib(1-e^{-i2nc}%
)(e^{i2nc}+e^{ic})]}{16n^{2}(2n-1)}\nonumber\\
&  \qquad+\frac{(b-a)^{2}(\cos4nc-\cos2nc)}{32\pi n^{4}}+\frac{(b^{2}%
-a^{2})(e^{-i2nc}-1)-(b-a)^{2}(e^{ic}+e^{i(2n-1)c}-2\cos(4n-1)c)}{16\pi
n^{2}(2n-1)^{2}}. \label{eq45}%
\end{align}

Now, we state the following main result.

\begin{theorem}
\label{t3} If the assumptions about $M$ given in Condition~\ref{c1} hold, then
$\lambda_{n}$ is an eigenvalue of $L(q)$ if and only if it is the root of
equation~\eqref{eq42} in the set $I_{n}:=[n^{2}-M,n^{2}+M]$.
\end{theorem}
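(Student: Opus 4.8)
The plan is to show the two implications separately, but in fact both directions rest on the single identity~\eqref{eq2} together with the convergence facts established in Lemma~\ref{l4}. First I would recall the setup: under Condition~\ref{c1} the bound~\eqref{eq43} holds, and moreover by Lemma~\ref{l4}(b) any eigenfunction $\Psi_n$ associated with $\lambda_n$ satisfies $(\Psi_n,\sin nx)\neq 0$, so dividing by it is legitimate. Starting from~\eqref{eq2}, namely $(\lambda_n-n^2-A_m(\lambda_n))(\Psi_n,\sin nx)=R_{m+1}(\lambda_n)$, I would divide through by $(\Psi_n,\sin nx)$ to get $\lambda_n-n^2-A_m(\lambda_n)=R_{m+1}(\lambda_n)/(\Psi_n,\sin nx)$. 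Then let $m\to\infty$: by~\eqref{eq7} (or, for small $n$, by the quantitative bound in the proof of Lemma~\ref{l4}(a)) we have $R_{m+1}(\lambda_n)\to 0$, while the denominator is a fixed nonzero number, so the right-hand side tends to $0$; on the left, $A_m(\lambda_n)=\sum_{k=0}^m a_k(\lambda_n)\to a_0(\lambda_n)+\sum_{k=1}^\infty a_k(\lambda_n)=-C_{2n}+\sum_{k=1}^\infty a_k(\lambda_n)$, where the series converges because Lemma~\ref{l4}(a)-type estimates force $a_k\to 0$ geometrically. Passing to the limit yields $\lambda_n-n^2+C_{2n}-\sum_{k=1}^\infty a_k(\lambda_n)=0$, which is exactly~\eqref{eq42}; and since $\lambda_n\in I_n$ by~\eqref{eq43}, this proves the ``only if'' direction.

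For the ``if'' direction I would argue by a counting/exhaustion argument rather than trying to directly invert the construction. Suppose $\mu\in I_n$ is a root of~\eqref{eq42}. The idea is that the function $F_n(\lambda):=\lambda-n^2+C_{2n}-\sum_{k\geq1}a_k(\lambda)$ is well-defined and continuous (indeed analytic) on $I_n$ because each $a_k(\lambda)$ is, under Condition~\ref{c1}, a uniformly convergent series of analytic functions with denominators bounded away from zero (using $|\lambda-k^2|\geq(2n-1)-M>0$ for $k\neq\pm n$, as established just before the theorem). Since $F_n$ is essentially a small perturbation of $\lambda\mapsto\lambda-n^2+C_{2n}$ — the tail $\sum_{k\geq1}a_k(\lambda)$ and its derivative are small on $I_n$ under Condition~\ref{c1} — $F_n$ is strictly increasing on $I_n$ and hence has at most one root there. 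But the ``only if'' part already produces one root of $F_n$ in $I_n$, namely the genuine eigenvalue $\lambda_n$ (which exists and lies in $I_n$ by~\eqref{eq43} and the spectral facts quoted from~\cite{6}). Therefore $\mu=\lambda_n$, and in particular $\mu$ is an eigenvalue of $L(q)$.

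The main obstacle I expect is making the ``at most one root'' claim rigorous: one must show that on $I_n$ the derivative $\frac{d}{d\lambda}\sum_{k\geq1}a_k(\lambda)$ has absolute value strictly less than $1$, so that $F_n'>0$. This requires differentiating the series~\eqref{eq5} term by term and estimating, for each $k$, a sum like $\sum C_{n_1}\cdots C_{n_k}(C_{\cdots}-C_{\cdots+2n})\cdot\bigl(\text{sum of }1/[\lambda-(n+\sum n_s)^2]^2\text{ over the factors}\bigr)\big/\prod[\lambda-(n+\sum n_s)^2]$; bounding this geometrically in $k$ via $|C_j|\leq M$, $|C_j|=O(1/j)$ for potential~\eqref{eq9}, and the separation $|\lambda-k^2|\geq(2n-1)-M$ for $k\neq\pm n$, one gets a bound of the form $\sum_k c\,(M/(2n-1-M))^{k-1}\cdot O(\ln n/n)$ or similar, which is $<1$ precisely under Condition~\ref{c1}. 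An alternative, perhaps cleaner route to uniqueness is to invoke the Banach fixed point setup of Theorem~\ref{t4} directly: rewrite~\eqref{eq42} as $\lambda=n^2-C_{2n}+\sum_{k\geq1}a_k(\lambda)=:T_n(\lambda)$ and check that $T_n$ maps $I_n$ into itself and is a contraction there, so it has a unique fixed point; combined with the ``only if'' direction (which shows $\lambda_n$ is such a fixed point), this again forces any root to equal $\lambda_n$. Either way, the genuinely delicate point is quantitative control of the $\lambda$-derivative (equivalently the contraction constant) of the infinite series on the whole interval $I_n$, uniformly, under the numerical hypothesis $M<(2n-1)/2$.
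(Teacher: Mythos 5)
Your proposal is correct in substance, and its ``only if'' half coincides with the paper's: both pass to the limit $m\to\infty$ in \eqref{eq2} using Lemma~\ref{l4} (nonvanishing of $(\Psi_n,\sin nx)$ and $R_m\to0$) and place $\lambda_n$ in $I_n$ via \eqref{eq43}. Where you genuinely diverge is the converse. The paper compares $G_n(\lambda)=\lambda-n^{2}+C_{2n}-\sum_{k\ge1}a_k(\lambda)$ with the affine auxiliary function $F_n(\lambda)=\lambda-n^{2}+C_{2n}-A_{1,n}(n^{2})+B_{1,n}(n^{2})$, checks numerically (via \eqref{eq45}) that $|F_n(\lambda)|>0.16$ while $|G_n(\lambda)-F_n(\lambda)|<0.1534$ on the boundary of $I_n$ for $n=1$ (with similar estimates for $n\ge2$), and invokes Rouch\'e's theorem to conclude that \eqref{eq42} has exactly one root in $I_n$, which must then be the eigenvalue; you instead obtain uniqueness of the root from a monotonicity/contraction argument, bounding $\bigl|\frac{d}{d\lambda}\sum_{k\ge1}a_k(\lambda)\bigr|<1$ on $I_n$ so that $G_n$ is strictly increasing, and then identify the unique root with the eigenvalue guaranteed by \eqref{eq43}. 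Your route is more elementary (no extension of $G_n$ off the real interval, which the appeal to Rouch\'e implicitly requires) and it reuses the machinery the paper develops anyway: the needed derivative bound is exactly the estimate $L_n\le 9/(2\pi(2\pi-3))<1$ of Theorem~\ref{t4}, except that there it is proved only for the truncated sums $a_{r,k,n}$, so you must redo the same geometric-in-$k$ estimate for the full series $a_k$ (the bound $3^{k+1}(b-a)^{k+1}/\bigl(4^{k+1}\pi^{k+1}(2n-1-M)^{k+1}\bigr)$ does not involve $r$, so this is routine but has to be stated), and for small $n$ your heuristic $O(\ln n/n)$ factor is of no use—you need precisely the uniform numerical bound under Condition~\ref{c1}, as in \eqref{eq50}. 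What the paper's Rouch\'e route buys is that only sup-norm estimates of $G_n-F_n$ on the boundary of $I_n$ are required rather than derivative bounds; what yours buys is a purely real-variable argument that dovetails directly with the fixed-point scheme of Theorems~\ref{t4} and~\ref{t5}.
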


\begin{proof}
\textbf{(a)} By Lemma~\ref{l4}, letting $m$ tend to infinity in
equation~\eqref{eq2}, we obtain
\[
\lambda_{n}-n^{2}+C_{2n}-\sum\limits_{k=1}^{\infty}a_{k}(\lambda_{n})=0.
\]
Now, we prove that the root of~\eqref{eq42} lying in the interval $I_{n}$ is
an eigenvalue of the operator $L$. To do this, we first give the following
relation:
\[
a_{1}(\lambda)=a_{1}(n^{2})+\sum_{\substack{n_{1}=-\infty\\n_{1}\neq
0,2n}}^{\infty}\frac{C_{n_{1}}(C_{n_{1}}-C_{2n+n_{1}})(\lambda-n^{2})}%
{n_{1}(2n+n_{1})[\lambda-(n+n_{1})^{2}]}%
\]
where $a_{1}(\lambda)$\ is defined in~\eqref{eq3}, $a_{1}(n^{2})=A_{1,n}%
(n^{2})-B_{1,n}(n^{2})$, $A_{1,n}(n^{2})$\ and $B_{1,n}(n^{2})$\ are given in~\eqref{eq11}.

The equation
\[
F_{n}(\lambda):=\lambda-n^{2}+C_{2n}-A_{1,n}(n^{2})+B_{1,n}(n^{2})
\]
has one root in the interval $I_{n}=[n^{2}-M,n^{2}+M]$ and
\[
|F_{n}(\lambda)|\geq|\lambda-n^{2}|-|C_{2n}-A_{1,n}(n^{2})+B_{1,n}(n^{2})|.
\]
Estimating $|C_{2n}-A_{1,n}(n^{2})+B_{1,n}(n^{2})|$ by~\eqref{eq45}, we have\
\[
|F_{n}(\lambda)|>0.16,
\]
for all $\lambda$ from the boundary of $I_{n}$, for $n=1$. Now we define
\[
G_{n}(\lambda):=\lambda-n^{2}+C_{2n}-\sum\limits_{k=1}^{\infty}a_{k}%
(\lambda_{n}).
\]
Estimating the summands of $|a_{1}(\lambda)-a_{1}(n^{2})|$\ and $|a_{k}%
(\lambda)|$ by considering the greatest summands of them,\ for $n=1$, we
obtain
\[
|a_{1}(\lambda)-a_{1}(n^{2})|<\frac{16}{45\pi^{2}},\qquad|a_{k}(\lambda
)|<\frac{\sqrt{2}^{k}}{\pi^{k+1}},
\]
for $k\geq2$. Therefore, it follows by the geometric series formula that
\[
|a_{1}(\lambda)-a_{1}(n^{2})|+\sum\limits_{k=2}^{\infty}|a_{k}(\lambda
)|<\frac{16}{45\pi^{2}}+\frac{\pi+1}{\pi^{2}(\pi-1)}<0.1534
\]
for $n=1$. Hence
\begin{align*}
&  |G_{n}(\lambda)-F_{n}(\lambda)|=\biggl|a_{1}(\lambda)-a_{1}(n^{2}%
)+\sum\limits_{k=2}^{\infty}a_{k}(\lambda)\biggr|\\
&  \qquad\leq|a_{1}(\lambda)-a_{1}(n^{2})|+\sum\limits_{k=2}^{\infty}%
|a_{k}(\lambda)|<\frac{16}{45\pi^{2}}+\frac{\pi+1}{\pi^{2}(\pi-1)}<0.1534
\end{align*}
for all $\lambda$ from the boundary of $I_{n}$, for $n=1$. Similar estimations
can be obtained also for $n\geq2$. Therefore $|G_{n}(\lambda)-F_{n}%
(\lambda)|<|F_{n}(\lambda)|$ holds for all $\lambda$ from the boundary of
$I_{n}$ and by Rouche's theorem, $G_{n}(\lambda)$ has one root in the set
$I_{n}$, for $n\geq1$. Hence, $L(q)$ has one eigenvalue (counting
multiplicity) in $I_{n}$, which is the root of~\eqref{eq42}. On the other
hand, equation~\eqref{eq42} has exactly one root (counting multiplicity) in
$I_{n}$. Thus, $\lambda\in I_{n}$ is an eigenvalue of $L(q)$ if and only if,
it is the root of~\eqref{eq42}.
\end{proof}

Now, we can use numerical methods by taking finite sums instead of the
infinite series in~\eqref{eq42} and write
\[
\lambda-n^{2}+C_{2n}-\sum\limits_{k=1}^{s}a_{r,k,n}(\lambda)=0,
\]
where $a_{r,k,n}(\lambda)$ is obtained from~\eqref{eq5} by taking finite sums
instead of the infinite series, namely,
\[
a_{r,k,n}(\lambda):=\sum_{n_{1},n_{2},...,n_{k}=-r}^{r}\frac{C_{n_{1}}%
C_{n_{2}}\cdots C_{n_{k}}(C_{n_{1}+n_{2}+\cdots+n_{k}}-C_{n_{1}+n_{2}%
+\cdots+n_{k}+2n})}{[\lambda-(n+n_{1})^{2}]\cdots\lbrack\lambda-(n+n_{1}%
+\cdots+n_{k})^{2}]}.
\]
Define the functions
\begin{equation}
K_{n}(\lambda):=\lambda-n^{2}-g_{n}(\lambda),\qquad g_{n}(\lambda
)=-C_{2n}+\sum\limits_{k=1}^{s}a_{r,k,n}(\lambda). \label{eq46}%
\end{equation}
Then,
\begin{equation}
\lambda=n^{2}+g_{n}(\lambda), \label{eq47}%
\end{equation}
for $n\geq1$.

Now we state another main result.

\begin{theorem}
\label{t4} Suppose that the assumptions about $M$ given in Condition~\ref{c1}
hold. Then for all $x$ and $y$ from the interval $I_{n}=[n^{2}-M,n^{2}+M]$,
the relations
\begin{gather}
|g_{n}(x)-g_{n}(y)|\leq L_{n}|x-y|,\label{eq48}\\
L_{n}=\frac{9(b-a)^{2}}{4\pi(2n-1-M)[4\pi(2n-1-M)-3(b-a)]}\leq\frac{9}%
{2\pi(2\pi-3)}<1,\nonumber
\end{gather}
hold and equation~\eqref{eq47} has a unique solution $\rho_{n}$ in $I_{n}$.
Moreover
\begin{align}
&  |\lambda_{n}-\rho_{n}|<\frac{(b-a)^{s+2}}{2M\sqrt{2}^{s}\pi^{s+1}%
(2n-1-M)^{s}[\sqrt{2}\pi(2n-1-M)-(b-a)](1-L_{n})}\nonumber\\
&  \qquad+\frac{8(b-a)^{2}}{\pi^{2}(r+1)^{2}[(r+1)|r+1-2n|-M](1-L_{n})}.
\label{eq49}%
\end{align}

\end{theorem}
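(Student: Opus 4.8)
The proof has three parts: the Lipschitz bound \eqref{eq48} with $L_n<1$; an application of the Banach fixed point theorem to the map $\lambda\mapsto n^{2}+g_n(\lambda)$ on $I_n$; and the a~posteriori bound \eqref{eq49}, obtained by comparing the fixed point $\rho_n$ with the true eigenvalue $\lambda_n$, which by Theorem~\ref{t3} satisfies $\lambda_n=n^{2}+G(\lambda_n)$ with $G(\lambda):=-C_{2n}+\sum_{k=1}^{\infty}a_k(\lambda)$. Throughout I will use that for $\lambda\in I_n$ and $\sigma\neq 0,-2n$ one has $|\lambda-(n+\sigma)^{2}|\ge|n^{2}-(n+\sigma)^{2}|-M=|\sigma||2n+\sigma|-M\ge 2n-1-M$, and that, since $a<0<b$ and $M=\max\{|a|,b\}$, we have $b-a\le 2M$.

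\emph{Part 1 --- the Lipschitz bound.} As $-C_{2n}$ is constant, $g_n'(\lambda)=\sum_{k=1}^{s}a_{r,k,n}'(\lambda)$, and each summand of $a_{r,k,n}$ is $C_{n_1}\cdots C_{n_k}(C_{\sigma_k}-C_{\sigma_k+2n})\prod_{j=1}^{k}[\lambda-(n+\sigma_j)^{2}]^{-1}$, where $\sigma_j=n_1+\cdots+n_j\neq0,-2n$; differentiating replaces one Cauchy factor by its square. On $I_n$ every factor $[\lambda-(n+\sigma_j)^{2}]^{-1}$ is bounded by $(2n-1-M)^{-1}$ and $|C_m|\le(b-a)/(\pi|m|)$. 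Performing the nested summation over $n_1,\dots,n_k$ --- using the denominators simultaneously as the uniform lower bound $2n-1-M$ and, where convergence requires it, as summation weights via sums of the type $\sum_{\sigma\neq0,-2n}|\lambda-(n+\sigma)^{2}|^{-1}$, and keeping track of the near-resonant indices $\sigma_j\in\{1,-1,-2n+1,-2n-1\}$ that furnish the dominant terms --- will give
\[
\bigl|a_{r,k,n}'(\lambda)\bigr|\le\Bigl(\frac{3(b-a)}{4\pi(2n-1-M)}\Bigr)^{k+1},\qquad \lambda\in I_n,\ k\ge1 .
\]
Condition~\ref{c1} gives $d:=2n-1-M\ge M$, hence $3(b-a)/(4\pi d)\le 3/(2\pi)<1$; summing the geometric series $\sum_{k\ge1}(\cdot)^{k+1}$ produces $L_n=\tfrac{9(b-a)^{2}}{4\pi d\,(4\pi d-3(b-a))}$, and monotonicity of $x\mapsto x^{2}/(1-x)$ on $[0,1)$ evaluated at $3/(2\pi)$ gives $L_n\le\tfrac{9}{2\pi(2\pi-3)}<1$.

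\emph{Part 2 --- fixed point and error estimate.} An analogous but one-power-milder estimate bounds $|a_{r,k,n}(\lambda)|$ geometrically, and with $|C_{2n}|\le(b-a)/(2\pi n)\le M/\pi$ this yields $|g_n(\lambda)|\le M$ on $I_n$; hence $\lambda\mapsto n^{2}+g_n(\lambda)$ maps $I_n$ into itself and, being an $L_n$-contraction with $L_n<1$ on the complete set $I_n$, has a unique fixed point $\rho_n\in I_n$ by the Banach fixed point theorem, i.e.\ \eqref{eq47} is uniquely solvable in $I_n$. For the error, writing $\lambda_n-\rho_n=(G(\lambda_n)-g_n(\lambda_n))+(g_n(\lambda_n)-g_n(\rho_n))$ and using $|g_n(\lambda_n)-g_n(\rho_n)|\le L_n|\lambda_n-\rho_n|$ (both points lie in $I_n$) gives $|\lambda_n-\rho_n|\le(1-L_n)^{-1}|G(\lambda_n)-g_n(\lambda_n)|$. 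Since $G(\lambda_n)-g_n(\lambda_n)=\sum_{k>s}a_k(\lambda_n)+\sum_{k=1}^{s}(a_k(\lambda_n)-a_{r,k,n}(\lambda_n))$, one bounds the first sum by a term of the form $|a_k(\lambda)|\le\tfrac{b-a}{2M\pi}\bigl(\tfrac{b-a}{\sqrt2\,\pi(2n-1-M)}\bigr)^{k}$ (the $\sqrt2$ coming from $\|\sin kx\|=\sqrt{\pi/2}$ via a Cauchy--Schwarz step, as in Lemma~\ref{l4}) and sums the geometric tail from $k=s+1$, obtaining the first term of \eqref{eq49}; and in each $a_k-a_{r,k,n}$ one isolates a summation index $n_j$ with $|n_j|\ge r+1$, bounds $|C_{n_j}|\le(b-a)/(\pi(r+1))$ and the corresponding denominator below by $(r+1)|r+1-2n|-M$, the remaining levels contributing a convergent geometric factor, and sums over $k$ and over the position of the large index, obtaining the second term of \eqref{eq49}.

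\emph{Main obstacle.} The heart of the argument is Part~1: extracting the clean per-level constant $3(b-a)/(4\pi(2n-1-M))$ (and its analogues in Part~2) uniformly for all $\lambda\in I_n$ and for every $n\ge1$ admissible under Condition~\ref{c1}. This is where the ``many technical calculations'' announced in the Introduction accumulate: one must estimate the $k$-fold sums $\sum_{n_1,\dots,n_k}$ without degrading the geometric decay in $k$, which demands a careful simultaneous use of the denominators $|\lambda-(n+\sigma_j)^{2}|$ as pointwise lower bounds and as summation weights, together with an exact accounting of the contributions of the indices $\sigma_j$ near the resonant values $0$ and $-2n$.
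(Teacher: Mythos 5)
Your proposal follows essentially the same route as the paper: the per-level estimate $|a_{r,k,n}'(\lambda)|\le\bigl(3(b-a)/(4\pi(2n-1-M))\bigr)^{k+1}$ summed geometrically to give $L_{n}\le 9/(2\pi(2\pi-3))<1$, the Banach fixed point theorem for $\lambda=n^{2}+g_{n}(\lambda)$ on $I_{n}$, and the error bound obtained by splitting $\sum_{k=1}^{\infty}a_{k}(\lambda_{n})-\sum_{k=1}^{s}a_{r,k,n}(\lambda_{n})$ into the tail over $k>s$ and the truncation differences for $k\le s$, exactly as in the paper's estimates (52)--(55). The only cosmetic differences are that you apply the Lipschitz bound directly to $\lambda_{n}-\rho_{n}$ where the paper invokes the mean value theorem for $K_{n}(\lambda)=\lambda-n^{2}-g_{n}(\lambda)$ (an equivalent step), and that you add an unverified self-mapping remark $|g_{n}(\lambda)|\le M$, which the paper does not use.
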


\begin{proof}
First, we prove~\eqref{eq48} by using the mean-value theorem. To do this, we
estimate $|g_{n}^{\prime}(\lambda)|=|\dfrac{d}{d\lambda}g_{n}(\lambda)|$.
By~\eqref{eq46}, we have
\[
|g_{n}^{\prime}(\lambda)|=\biggl|\sum\limits_{k=1}^{s}\frac{d}{d\lambda
}a_{r,k,n}(\lambda)\biggr|\leq\sum\limits_{k=1}^{s}\bigl|\frac{d}{d\lambda
}a_{r,k,n}(\lambda)\bigr|.
\]
By estimating the summands of the term $|\dfrac{d}{d\lambda}a_{r,k,n}%
(\lambda)|$, we write
\begin{align*}
&  \biggl|\frac{d}{d\lambda}(a_{r,k,n}(\lambda))\biggr|=\biggl|\sum
_{n_{1},n_{2},...,n_{k}=-r}^{r}\frac{d}{d\lambda}\frac{C_{n_{1}}C_{n_{2}%
}\cdots C_{n_{k}}(C_{n_{1}+n_{2}+\cdots+n_{k}}-C_{n_{1}+n_{2}+\cdots+n_{k}%
+2n})}{[\lambda-(n+n_{1})^{2}]\cdots\lbrack\lambda-(n+n_{1}+\cdots+n_{k}%
)^{2}]}\biggr|\\
&  \qquad<\frac{3^{k+1}(b-a)^{k+1}}{4^{k+1}\pi^{k+1}(2n-1-M)^{k+1}}%
\leq\bigl|\frac{3}{2\pi}\bigr|^{k+1},
\end{align*}
for $k\geq1$. Thus, by the geometric series formula, we obtain
\[
\sum\limits_{k=1}^{r}\bigl|\frac{d}{d\lambda}a_{r,k,n}(\lambda)\bigr|<\frac
{9}{2\pi(2\pi-3)}.
\]
Hence,
\begin{equation}
|g_{n}^{\prime}(\lambda)|<\frac{9(b-a)^{2}}{4\pi(2n-1-M)[4\pi(2n-1-M)-3(b-a)]}%
=L_{n}\leq\frac{9}{2\pi(2\pi-3)}<1.\label{eq50}%
\end{equation}
Since the inequality
\begin{equation}
|g_{n}^{\prime}(\lambda)|<L_{n}<1\label{eq51}%
\end{equation}
holds for all $x,y\in I_{n}$,~\eqref{eq48} holds by the mean value theorem and
equation~\eqref{eq47} has a unique solution $\rho_{n}$ in $I_{n}$, by the
contraction mapping theorem. Now let us prove~\eqref{eq49}. By~\eqref{eq46},
we have $K_{n}(x)=x-n^{2}-g_{n}(x)$ and by the definition of $\rho_{n}$, we
write $K_{n}(\rho_{n})=0$. Therefore by~\eqref{eq42} and~\eqref{eq47}, we
obtain
\begin{align}
&  |K_{n}(\lambda_{n})-K_{n}(\rho_{n})|=|K_{n}(\lambda_{n})|\nonumber\\
&  \qquad=\biggl|\lambda_{n}-n^{2}+C_{2n}-\sum\limits_{k=1}^{s}a_{r,k,n}%
(\lambda_{n})\biggr|\nonumber\\
&  \qquad\leq\biggl|\sum\limits_{k=1}^{\infty}a_{k}(\lambda_{n})-\sum
\limits_{k=1}^{s}a_{r,k,n}(\lambda_{n})\biggr|.\label{eq52}%
\end{align}
For the estimation of the right-hand side of~\eqref{eq52}, we have
\begin{align*}
&  \biggl|\sum\limits_{k=1}^{\infty}a_{k}(\lambda_{n})-\sum\limits_{k=1}%
^{s}a_{r,k,n}(\lambda_{n})\biggr|\\
&  \qquad\leq\biggl|\sum\limits_{k=1}^{\infty}a_{k}(\lambda_{n})-\sum
\limits_{k=1}^{s}a_{k}(\lambda_{n})\biggr|+\biggl|\sum\limits_{k=1}^{s}%
a_{k}(\lambda_{n})-\sum\limits_{k=1}^{s}a_{r,k,n}(\lambda_{n})\biggr|\\
&  \qquad\leq\sum\limits_{k=s+1}^{\infty}|a_{k}(\lambda_{n})|+\sum
\limits_{k=1}^{s}|a_{k}(\lambda_{n})-a_{r,k,n}(\lambda_{n})|.
\end{align*}
Using the estimations for $\sum\limits_{k=s+1}^{\infty}|a_{k}(\lambda_{n}%
)|$\ and $\sum\limits_{k=1}^{s}|a_{k}(\lambda_{n})-a_{r,k,n}(\lambda_{n})|$,
by considering the greatest summands of them, we obtain
\begin{align}
&  \biggl|\sum\limits_{k=1}^{\infty}a_{k}(\lambda_{n})-\sum\limits_{k=1}%
^{s}a_{r,k,n}(\lambda_{n})\biggr|<\sum\limits_{k=s+1}^{\infty}\frac
{(b-a)^{k+1}}{2M\sqrt{2}^{k}\pi^{k+1}(2n-1-M)^{k}}+\frac{8(b-a)^{2}}{\pi
^{2}(r+1)^{2}[(r+1)|r+1-2n|-M]}\nonumber\\
&  \qquad=\frac{(b-a)^{s+2}}{2M\sqrt{2}^{s}\pi^{s+1}(2n-1-M)^{s}[\sqrt{2}%
\pi(2n-1-M)-(b-a)]}+\frac{8(b-a)^{2}}{\pi^{2}(r+1)^{2}[(r+1)|r+1-2n|-M]}%
.\label{eq53}%
\end{align}
Thus, by~\eqref{eq52} and~\eqref{eq53}, we have
\begin{align}
&  |K_{n}(\lambda_{n})-K_{n}(\rho_{n})|<\frac{(b-a)^{s+2}}{2M\sqrt{2}^{s}%
\pi^{s+1}(2n-1-M)^{s}[\sqrt{2}\pi(2n-1-M)-(b-a)]}\nonumber\\
&  \qquad+\frac{8(b-a)^{2}}{\pi^{2}(r+1)^{2}[(r+1)|r+1-2n|-M]}.\label{eq54}%
\end{align}
To apply the mean value theorem, we estimate $|K_{n}^{\prime}(\lambda)|$:
\begin{equation}
|K_{n}^{\prime}(\lambda)|=|1-g_{n}^{\prime}(\lambda)|\geq|1-|g_{n}^{\prime
}(\lambda)||\geq1-L_{n}.\label{eq55}%
\end{equation}
By the mean value formula,~\eqref{eq50},~\eqref{eq51},~\eqref{eq54}
and~\eqref{eq55}, we obtain
\[
|K_{n}(\lambda_{n})-K_{n}(\rho_{n})|=|K_{n}^{\prime}(\xi)||\lambda_{n}%
-\rho_{n}|,\qquad\xi\in\lbrack n^{2}-M,n^{2}+M]
\]
and
\begin{align*}
&  |\lambda_{n}-\rho_{n}|=\frac{|K_{n}(\lambda_{n})-K_{n}(\rho_{n})|}%
{|K_{n}^{\prime}(\xi)|}<\frac{(b-a)^{s+2}}{2M\sqrt{2}^{s}\pi^{s+1}%
(2n-1-M)^{s}[\sqrt{2}\pi(2n-1-M)-(b-a)](1-L_{n})}\\
&  \qquad+\frac{8(b-a)^{2}}{\pi^{2}(r+1)^{2}[(r+1)|r+1-2n|-M](1-L_{n})},
\end{align*}
which completes the proof.
\end{proof}

Now let us approximate $\rho_{n}$ by the fixed point iterations:
\begin{equation}
x_{n,i+1}=n^{2}+g_{n}(x_{n,i}), \label{eq56}%
\end{equation}
where $g_{n}(x)$ is defined in~\eqref{eq46}. Since
\begin{equation}
|g_{n}(\lambda_{n})|=\biggl|-C_{2n}+\sum\limits_{k=1}^{s}a_{r,k,n}(\lambda
_{n})\biggr|\leq|C_{2n}|+\sum\limits_{k=1}^{s}|a_{r,k,n}(\lambda_{n})|,
\label{eq57}%
\end{equation}
first let us estimate $|a_{r,k,n}(\lambda)|$:
\begin{align}
&  \sum\limits_{k=1}^{s}|a_{r,k,n}(\lambda_{n})|<\frac{(b-a)^{2}}{M\pi
^{2}(2n-1-M)}+\sum\limits_{k=2}^{r}\frac{(b-a)^{k+1}}{2M\sqrt{2}^{k}\pi
^{k+1}(2n-1-M)^{k}}\nonumber\\
&  \qquad<\frac{4}{\pi^{2}}+\sum\limits_{k=2}^{r}\frac{\sqrt{2}^{k}}{\pi
^{k+1}}<\frac{4}{\pi^{2}}+\frac{2}{\pi^{2}(\pi-\sqrt{2})}. \label{eq58}%
\end{align}
Therefore,
\[
|g_{n}(\lambda_{n})|\leq|C_{2n}|+\sum\limits_{k=1}^{s}|a_{r,k,n}(\lambda
_{n})|<\frac{(b-a)}{2\pi n}+\frac{4}{\pi^{2}}+\frac{2}{\pi^{2}(\pi-\sqrt{2}%
)}.
\]
On the other hand, writing $(2n-1)$ instead of $(2n-1)-M$ in~\eqref{eq57}
and~\eqref{eq58}, we obtain
\begin{align}
&  |g_{n}(n^{2})|\leq|C_{2n}|+\sum\limits_{k=1}^{s}|a_{r,k,n}(n^{2}%
)|<\frac{(b-a)}{2\pi n}+\frac{(b-a)^{2}}{M\pi^{2}(2n-1)}+\sum\limits_{k=2}%
^{r}\frac{(b-a)^{k+1}}{2M\sqrt{2}^{k}\pi^{k+1}(2n-1)^{k}}\nonumber\\
&  \qquad<\frac{(b-a)}{2\pi n}+\frac{(b-a)^{2}}{M\pi^{2}(2n-1)}+\frac
{(b-a)^{3}}{2\sqrt{2}M\pi^{2}(2n-1)[\sqrt{2}\pi(2n-1)-(b-a)]}, \label{eq59}%
\end{align}
since $|n^{2}-k^{2}|\geq2n-1$, for $n=1,2,\ldots$. Now we state the following result.

\begin{theorem}
\label{t5} If the assumptions about $M$ given in Condition~\ref{c1} hold, then
the following estimation holds for the sequence $\{x_{n,i}\}$ defined
by~\eqref{eq56}:
\begin{align}
&  |x_{n,i}-\rho_{n}|<(L_{n})^{i}\biggl(\frac{(b-a)}{2\pi n(1-L_{n})}%
+\frac{(b-a)^{2}}{M\pi^{2}(2n-1)(1-L_{n})}\nonumber\\
&  \qquad+\frac{(b-a)^{3}}{2\sqrt{2}M\pi^{2}(2n-1)[\sqrt{2}\pi
(2n-1)-(b-a)](1-L_{n})}\biggr), \label{eq60}%
\end{align}
for $i=1,2,3,\ldots$, where $L_{n}$ is defined in~\eqref{eq48} in
Theorem~\ref{t4}.
\end{theorem}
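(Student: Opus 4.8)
The plan is to read~\eqref{eq60} as the standard a priori error estimate attached to the Banach fixed point theorem for the contraction $T_n(\lambda):=n^2+g_n(\lambda)$ on $I_n=[n^2-M,n^2+M]$, and to make it quantitative by feeding in the explicit first-iterate bound~\eqref{eq59}. Throughout I take the iteration~\eqref{eq56} to be started at $x_{n,0}=n^2$, so that $x_{n,1}=n^2+g_n(n^2)$ and hence $|x_{n,1}-x_{n,0}|=|g_n(n^2)|$.

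First I would invoke Theorem~\ref{t4}, which supplies exactly the two facts needed: $g_n$ obeys $|g_n(x)-g_n(y)|\le L_n|x-y|$ on $I_n$ with $L_n<1$ (so $T_n$ is a contraction there), and $T_n$ has the unique fixed point $\rho_n\in I_n$. I must also check that the iterates never leave $I_n$, so that the Lipschitz inequality~\eqref{eq48} may be applied at each step; this reduces to the bound $|g_n(\lambda)|\le M$ for $\lambda\in I_n$, which follows from the same estimates used to obtain~\eqref{eq57}--\eqref{eq59}, the only input there being $|\lambda-k^2|\ge 2n-1-M$ for $k\ne\pm n$, a lower bound that holds for every $\lambda\in I_n$ by the remarks after~\eqref{eq43}. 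By induction from $x_{n,0}=n^2\in I_n$ one then gets $x_{n,i}\in I_n$ for all $i$.

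With invariance in hand the rest is the classical telescoping estimate: for $i\ge 0$ and $p\ge 1$,
\[
|x_{n,i+p}-x_{n,i}|\le\sum_{j=i}^{i+p-1}|x_{n,j+1}-x_{n,j}|\le\sum_{j=i}^{i+p-1}L_n^{\,j}\,|x_{n,1}-x_{n,0}|\le\frac{L_n^{\,i}}{1-L_n}\,|x_{n,1}-x_{n,0}|,
\]
where the middle inequality uses $|x_{n,j+1}-x_{n,j}|=|g_n(x_{n,j})-g_n(x_{n,j-1})|\le L_n|x_{n,j}-x_{n,j-1}|$ iterated down to $j=1$. Letting $p\to\infty$ (the sequence is Cauchy, hence converges to $\rho_n$ since $L_n<1$) gives $|x_{n,i}-\rho_n|\le\frac{L_n^{\,i}}{1-L_n}|g_n(n^2)|$, and substituting the explicit bound~\eqref{eq59} for $|g_n(n^2)|$ and distributing the factor $\frac{L_n^{\,i}}{1-L_n}$ over its three summands produces precisely the right-hand side of~\eqref{eq60}.

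The only genuinely delicate point is the invariance of $I_n$ under $T_n$ — i.e.\ controlling $|g_n|$ on all of $I_n$, not merely at $\lambda_n$ — since this is what legitimizes using the single Lipschitz constant $L_n$ at every iteration; once that is settled the statement is an immediate specialization of the contraction-mapping a priori bound, with~\eqref{eq59} plugged in for the initial displacement. Everything else is a direct quotation of Theorem~\ref{t4} and inequality~\eqref{eq59} together with routine bookkeeping.
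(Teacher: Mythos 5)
Your proposal is correct and is essentially the paper's argument: both rest on the Lipschitz bound~\eqref{eq48} from Theorem~\ref{t4} and the explicit estimate~\eqref{eq59} for $|g_n(n^2)|$, arriving at $|x_{n,i}-\rho_n|\le \frac{L_n^{\,i}}{1-L_n}|g_n(n^2)|$ before substituting~\eqref{eq59}. The only cosmetic difference is that you produce the factor $1/(1-L_n)$ by the telescoping/Cauchy-sequence argument, whereas the paper iterates $|x_{n,i}-\rho_n|<L_n^{\,i}|x_{n,0}-\rho_n|$ and bounds $|x_{n,0}-\rho_n|\le |g_n(n^2)|/(1-L_n)$ via the mean value theorem; your explicit flagging of the invariance of $I_n$ under the iteration is a point the paper leaves implicit rather than a divergence in method.
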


\begin{proof}
Without loss of generality, we can take $x_{n,0}=n^{2}$.
By~\eqref{eq47},~\eqref{eq48}, and~\eqref{eq56}, we have
\begin{align}
&  |x_{n,i}-\rho_{n}|=|n^{2}+g_{n}(x_{n,i-1})-(n^{2}+g_{n}(\rho_{n}%
))|\nonumber\\
&  \qquad=|g_{n}(x_{n,i-1})-g_{n}(\rho_{n})|<L_{n}|x_{n,i-1}-\rho_{n}%
|<(L_{n})^{i}|x_{n,0}-\rho_{n}|. \label{eq61}%
\end{align}
Therefore, it is enough to estimate $|x_{n,0}-\rho_{n}|$. By definitions of
$\rho_{n}$ and $x_{n,0}$, we obtain
\[
\rho_{n}-x_{n,0}=g_{n}(\rho_{n})+n^{2}-x_{n,0}=g_{n}(\rho_{n})-g_{n}%
(x_{n,0})+g_{n}(n^{2})
\]
and by the mean value theorem, there exists $x\in\lbrack n^{2}-M,n^{2}+M]$
such that
\[
g_{n}(\rho_{n})-g_{n}(x_{n,0})=g_{n}^{\prime}(x)(\rho_{n}-x_{n,0}).
\]
The last two equalities imply that
\[
(\rho_{n}-x_{n,0})(1-g_{n}^{\prime}(x))=g_{n}(n^{2}).
\]
Hence by~\eqref{eq51},~\eqref{eq59}, and~\eqref{eq61}, we have
\begin{align*}
&  |\rho_{n}-x_{n,0}|\leq\frac{|g_{n}(n^{2}))|}{1-L_{n}}<\frac{(b-a)}{2\pi
n(1-L_{n})}+\frac{(b-a)^{2}}{M\pi^{2}(2n-1)(1-L_{n})}\\
&  \qquad+\frac{(b-a)^{3}}{2\sqrt{2}M\pi^{2}(2n-1)[\sqrt{2}\pi
(2n-1)-(b-a)](1-L_{n})}%
\end{align*}
and
\begin{align*}
&  |x_{n,i}-\rho_{n}|<(L_{n})^{i}\biggl(\frac{(b-a)}{2\pi n(1-L_{n})}%
+\frac{(b-a)^{2}}{M\pi^{2}(2n-1)(1-L_{n})}\\
&  \qquad+\frac{(b-a)^{3}}{2\sqrt{2}M\pi^{2}(2n-1)[\sqrt{2}\pi
(2n-1)-(b-a)](1-L_{n})}\biggr).
\end{align*}
The theorem is proved.
\end{proof}

Thus by~\eqref{eq49} and \eqref{eq60}, we have the approximation $x_{n,i}$ for
the Dirichlet eigenvalue $\lambda_{n}$ with the error
\begin{align*}
&  |\lambda_{n}-x_{n,i}|<\frac{(b-a)^{s+2}}{2M\sqrt{2}^{s}\pi^{s+1}%
(2n-1-M)^{s}[\sqrt{2}\pi(2n-1-M)-(b-a)](1-L_{n})}\\
&  \qquad+\frac{8(b-a)^{2}}{\pi^{2}(r+1)^{2}[(r+1)|r+1-2n|-M](1-L_{n})}%
+(L_{n})^{i}\biggl(\frac{(b-a)}{2\pi n(1-L_{n})}\\
&  \qquad+\frac{(b-a)^{2}}{M\pi^{2}(2n-1)(1-L_{n})}+\frac{(b-a)^{3}}{2\sqrt
{2}M\pi^{2}(2n-1)[\sqrt{2}\pi(2n-1)-(b-a)](1-L_{n})}\biggr).
\end{align*}
By this error formula, it is clear that the error gets smaller as $s$ and $r$\ increase.

Now, we present a numerical example:

\begin{example}
For $a=-1/2$, $b=1/2$, and $c=\pi/2$, we have the following approximations for
the first Dirichlet eigenvalues $\lambda_{1}$, $\lambda_{2}$, $\lambda_{3}$,
$\lambda_{4}$, $\lambda_{5}$ and $\lambda_{6}$. In our calculations, we take
$s=r=5$.
\begin{align*}
\lambda_{1}  &  =0.984205232093,\quad\lambda_{2}=4.003110832419,\quad
\lambda_{3}=9.013023482675\\
\lambda_{4}  &  =16.018415152245,\quad\lambda_{5}=25.010308870985,\quad
\lambda_{6}=36.010794654577.
\end{align*}
Usually it takes $8-10$ iterations with the tolerance $1e-18$\ by the fixed
point iteration method, even if we choose an initial value that is not too
close to the exact value, which means that convergence is quite fast.
\end{example}

\end{document}